\DeclareMathAlphabet{\mymathbb}{U}{BOONDOX-ds}{m}{n}
\theoremstyle{plain}
\newtheorem{theorem}{Theorem}[section]
\newtheorem{fact}[theorem]{Fact}
\newtheorem{claim}[theorem]{Claim}
\newtheorem{corollary}[theorem]{Corollary}
\newtheorem{lemma}[theorem]{Lemma}
\newtheorem{observation}[theorem]{Observation}
\newtheorem{conjecture}[theorem]{Conjecture}
\newtheorem*{theorem*}{Theorem}
\newtheorem*{corollary*}{Corollary}
\theoremstyle{definition}
\newtheorem{definition}[theorem]{Definition}
\title{Hindrance from a wasteful common independent set}
\author{Attila Jo\'{o}}
\thanks{Funded by the Deutsche Forschungsgemeinschaft (DFG, German Research Foundation) - Grant No. 513023562}
\address{Attila Jo\'{o},
Department of Mathematics, University of Hamburg, Bundesstra{\ss}e 55 (Geomatikum), 20146 Hamburg, Germany}
\email{attila.joo@uni-hamburg.de}
\keywords{}
\subjclass[2020]{Primary 05B35. Secondary 03E05, 05C63.} 
\begin{document}
\begin{abstract}
For (potentially infinite) matroids $ M $ and $ N $, an $ (M,N) $-hindrance 
is a set $ H$ that is independent but not spanning in $ N.\mathsf{span}_M(H) $. This concept was 
introduced by Aharoni and Ziv in the very first paper investigating Nash-Williams' Matroid Intersection Conjecture.  They 
proved that the conjecture is equivalent to the statement that the non-existence of hindrances implies the existence of an $ M 
$-independent spanning set of $ N $. 

In this paper we present a breakthrough towards the Matroid Intersection Conjecture. Namely, we found a matroidal 
generalization of the `popular 
vertex' approach applied in the proof of the infinite version of König's theorem.  The main result of this paper is an 
application of this new approach to show that 
if $ M $ and $ 
N $ admit a common independent set $ I $ that is ``wasteful'' in the sense 
that $ r(M/I)<r(N/I) $, then there exists an $ (M,N) $-hindrance. 
\end{abstract}
\maketitle

\section{Introduction}
The infinite version of König's Duality Theorem \cite{aharoni1984konig} states that every (finite or infinite) bipartite 
graph  $ G=(S,T,E) $ admits a matching $ F\subseteq E $ such that one can obtain a vertex-cover of $ G $ by choosing precisely 
one endpoint of
each $ e \in F $. This is a special case of the following conjecture  (known as the Matroid Intersection Conjecture) of 
Nash-Williams 
\cite[Conjecure 1.2]{aharoni1998intersection}:  If $ M $ and $ N 
$ are finitary 
matroids\footnote{For matroidal terminology see Subsection \ref{subsec: matroids}.} on a common edge set $ E $, then 
there exists a common independent set $ I $ and a partition $ E=E_M\cup E_N $ such that $ I\cap E_M $ spans $ E_M $ in $ M $ and 
$ 
I\cap E_N $ spans 
$ E_N $ in $ N$. 

In the proof of the infinite version of König's Duality Theorem, the starting point is an asymmetric reformulation. A bipartite 
graph $ 
G=(S,T,E) $ is called hindered if there is an $ X\subseteq T $ for which the neighbourhood $ N_G(X) $ of $ X $ can be matched into a proper 
subset of $ X $.  
The 
reformulation in question says that if $ G=(S,T,E) $ is unhindered, then it admits a 
matching that covers $ T $. A similar  asymmetric reformulation, introduced in \cite{aharoni1998intersection}, turned out to be useful in the 
context of the Matroid Intersection Conjecture as 
well. The ordered pair $ (M,N) $ of matroids is called hindered if there is an  $ H\subseteq E $ which is independent but not 
spanning in $ 
N.\mathsf{span}_M(H) $. 
The following statement is known to be equivalent to the Matroid Intersection Conjecture and it is usually the preferred angle 
to approach it: 
\begin{conjecture}[{\cite[Conjecture 3.3]{aharoni1998intersection}}]
If $ (M,N) $ is unhindered, then there exists an $ M $-independent set that is spanning in $ N $.
\end{conjecture}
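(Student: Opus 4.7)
The plan is to produce the required $M$-independent $N$-spanning set by transfinite recursion, with this paper's main theorem (the wasteful-hindrance implication) as the workhorse at every stage. Assume $(M,N)$ is unhindered. I would first isolate two ingredients: (i) for any common independent set $I$, the contracted pair $(M/I, N/I)$ on $E\setminus I$ inherits unhinderedness, which should follow by chasing definitions and the compatibility of $\mathsf{span}$ with contraction; (ii) as the contrapositive of the paper's main theorem, every common independent set $I$ of the unhindered pair $(M,N)$ satisfies $r(M/I) \geq r(N/I)$, i.e.\ no common independent set is wasteful. Ingredient (ii) supplies the crucial ``$M$-room'' at every stage to keep up with $N$'s spanning demands.

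Next I would attempt the following recursion along an enumeration of $E$, building an increasing chain $(I_\alpha)$ of common independent sets. At a successor step, if $I_\alpha$ is not yet $N$-spanning, pick a target $e \notin \mathsf{span}_N(I_\alpha)$; ingredients (i) and (ii) applied to $(M/I_\alpha, N/I_\alpha)$ should provide an augmenting element (or, more generally, an augmenting exchange chain playing the role of an alternating path in bipartite matching) $f$ such that $I_\alpha + f$ is common independent and $\mathsf{span}_N(I_\alpha + f) \supsetneq \mathsf{span}_N(I_\alpha)$, and iterating such augmentations will eventually draw $e$ into the $N$-span. At a limit ordinal $\lambda$ I would take the union $I_\lambda = \bigcup_{\alpha < \lambda} I_\alpha$ and use (i) to continue. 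Once the procedure exhausts $E$, passing to any $M$-basis of $I_\infty$ yields the desired $M$-independent $N$-spanning set.

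The decisive obstacle is the limit step outside the finitary case: unions of chains of common independent sets need not be common independent, and even when they are, the no-wastefulness property need not persist under such unions. Here I would deploy the matroidal popular-element machinery advertised in the abstract: declare $e \in E \setminus I_\alpha$ to be \emph{popular at stage $\alpha$} if every maximal common independent extension of $I_\alpha$ carries $e$ into its $N$-span, handle popular elements \emph{en bloc}, and quotient them out by contraction before the next stage. The hardest part, and where my proposal is most speculative, is to verify that after this bulk absorption the induced pair on the remaining ground set is still unhindered and still satisfies $r(M/\cdot) \geq r(N/\cdot)$, so that the paper's wasteful-hindrance theorem can be re-invoked at the next stage without circularity. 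Equivalently, one must calibrate the ``popularity'' notion carefully enough that it (a) exhausts the stabilisation of $\mathsf{span}_N(I_\alpha)$ and (b) is preserved under the transfinite contractions the recursion imposes; getting both simultaneously is where any honest proof of Conjecture~1.1 along these lines must do its real work.
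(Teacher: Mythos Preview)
The statement you are trying to prove is labelled a \emph{Conjecture} in the paper, not a theorem: the paper does not prove it, and explicitly presents Theorem~\ref{thm: main} only as ``a breakthrough towards'' this equivalent form of the Matroid Intersection Conjecture. There is therefore no proof in the paper to compare your proposal against; any complete argument here would resolve a well-known open problem.

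Your outline does not close the gap. The successor step does not actually require the paper's main theorem: Lemma~\ref{lem: no aug hindrance} already guarantees an augmenting exchange whenever $I$ fails to span $N$ and $(M,N)$ is unhindered, so ingredient~(ii) is idle there. The entire difficulty lies, as you correctly identify, at the limit stages of the transfinite recursion, and your ``popular element'' sketch is not a proof but a restatement of the problem. In particular, your notion of popularity (an edge eventually $N$-spanned by every maximal common independent extension) is quite different from the paper's popularity matroid, which is defined relative to a fixed pair of bases and a $\triangle$-system of $M$-circuits and is used for the much more modest goal of producing a single hindrance from a single wasteful $I$; there is no indication in the paper that this machinery iterates through a transfinite construction in the way you need. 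Also note that ingredient~(i) as you state it, namely that $(M/I,N/I)$ inherits unhinderedness, is not the minor operation handled by Observation~\ref{obs:  hindrance minor}, which concerns $(M\setminus A, N/A)$; the symmetric contraction you invoke would itself need justification. In short, the proposal correctly locates where the difficulty is but does not overcome it, which is consistent with the conjecture remaining open.
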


A key ingredient of Aharoni's proof of König's Duality Theorem for uncountable graphs is the concept of `popular vertices'. 
Roughly speaking, a 
vertex $ v $ is popular with respect to a matching if there are ``a lot of''  augmenting paths that terminate at $ v $ and are disjoint apart from $ v $. 
The 
simplest example where the `popular vertex' method can be applied is the proof that $ G=(S,T,E) $ is hindered under the assumption that there is a
matching $ F $ that is ``wasteful''\footnote{Wasteful from the perspective of searching for a matching that covers $ T $.} in the sense that more 
vertices in $ T $ are 
uncovered by $ F $ than in $ S $. We gave a direct proof for this in 
\cite{joo2025hindranclinkage} in a 
more general setting.

After 
the Matroid Intersection 
Conjecture was settled for countable matroids in \cite{joo2021MIC}, we were searching for an adequate matroidal 
generalization of Aharoni's 
`popular 
vertex' technique. These efforts were eventually successful, and the resulting methods are explained in this paper by proving 
the matroidal generalization of creating hindrances from wasteful matchings:

\begin{theorem}\label{thm: main}
If $ M $ and $ N $ are finitary matroids on a common ground set and there exists an $ I \in \mathcal{I}(M)\cap \mathcal{I}(N) $ with $ 
r(M/I)<r(N/I) $, then $ (M,N) $ is hindered. 
\end{theorem}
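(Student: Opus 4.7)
The plan is to construct a hindrance by enlarging $I$ greedily and then, where necessary, rearranging it using the new matroidal popular-vertex method announced in the abstract.

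\textbf{Greedy enlargement.} I would iterate the move: while there exists $y\in E\setminus(\mathsf{span}_M(I)\cup\mathsf{span}_N(I))$, replace $I$ by $I+y$. Each such step keeps $I$ in $\mathcal I(M)\cap\mathcal I(N)$ and, in the finite-rank case, decrements both $r(M/I)$ and $r(N/I)$ by one, preserving the strict inequality. Continued transfinitely, taking unions of the chain at limit ordinals (which remain common-independent by finitariness of $M$ and $N$), the process reaches a common-independent $I^{*}$ with $E=\mathsf{span}_M(I^{*})\cup\mathsf{span}_N(I^{*})$ and, if wastefulness can be maintained throughout, still $r(M/I^{*})<r(N/I^{*})$. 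Then $r(N/I^{*})>0$ yields an $e\notin\mathsf{span}_N(I^{*})$, and by saturation $e\in\mathsf{span}_M(I^{*})$; this is the natural candidate hindrance witness.

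\textbf{Where the popular-vertex method enters.} Two things can go wrong with the naive procedure and both require the new machinery. First, at a limit stage of uncountable cofinality, the strict inequality $r(M/I_\lambda)<r(N/I_\lambda)$ can collapse even though it holds at every successor step. Second, the saturated $I^{*}$ may fail to be independent in the auxiliary matroid $N.\mathsf{span}_M(I^{*})$ in the precise sense demanded by the hindrance definition, since elements of $I^{*}$ may be $N$-spanned by $E\setminus\mathsf{span}_M(I^{*})$. The matroidal popular-vertex method addresses both: alongside the chain one maintains, for each $N$-free source, an alternating trail in the exchange digraph of $I_\alpha$ aiming at an $M$-free target, and an element is declared \emph{popular} when many such trails terminate there. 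Popular elements supply either additional augmentation moves keeping the limit inequality intact or a bounded rerouting of $I^{*}$ into an $H$ that properly certifies hindrance.

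\textbf{Main obstacle.} The technical heart lies in the formulation of matroidal popularity itself and the verification that it survives at arbitrary infinite cardinalities. In the graph case disjoint alternating paths compose by elementary concatenation; in the matroid setting, exchange circuits in $M$ and $N$ can overlap intricately, so the analogue of "many pairwise almost-disjoint alternating trails" must simultaneously respect both circuit structures. Controlling this interaction at limit stages, rather than any individual finitary calculation, is where I expect the genuine difficulty of the proof to reside.
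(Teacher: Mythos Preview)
Your proposal is not a proof; it is an outline that correctly identifies the easy preprocessing step and then names the obstacles without overcoming them. Extending $I$ to a maximal common independent set is indeed the paper's first move, and your observation that one then wants an $e\in\mathsf{span}_M(I^{*})\setminus\mathsf{span}_N(I^{*})$ is the right intuition. But everything after that is a placeholder: you invoke ``the popular-vertex method'' as a black box, and you yourself concede that ``the technical heart lies in the formulation of matroidal popularity itself''---which you then do not formulate. A proof proposal must contain the key definitions and at least the skeleton of the key arguments; yours contains neither.

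Concretely, what is missing is substantial. The paper does not proceed by nursing the inequality $r(M/I_\alpha)<r(N/I_\alpha)$ through a transfinite enlargement and rescuing it at limits, as you suggest. After reducing to $\kappa:=r(N/I)$ uncountable regular, it fixes bases $B_M=I\cup J_M$ and $B_N=I\cup J_N$ once and for all and works in an auxiliary exchange digraph $D(b)$. The central new object is the \emph{popularity matroid} $L$: a finite set $K$ is declared popular if there is a $\kappa$-sized $\triangle$-system of $M$-circuits with kernel $K$ whose petals live in $I\cup\mathsf{ter}(\mathcal{P})$ for some $\kappa$-sized system $\mathcal{P}$ of disjoint $b$-paths; one then shows $\mathcal{C}(M)\cup\{\text{popular sets}\}$ satisfies strong circuit elimination, so its minimal elements form the circuits of a matroid $L$ in which every popular set is a scrawl. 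The engine producing popular sets is a $\triangle$-system lemma (Lemma~\ref{lem: key}) proved via Fodor's lemma. Using $L$, one recursively defines a set $A$ of ``bad'' edges, proves that no $\kappa$-sized $b$-path-system can terminate in $A$, and finally builds the hindrance as $H^{*}=(B_M'\setminus A)\cup\mathsf{ter}(\mathcal{P})$ for a carefully chosen maximal sequence $\mathcal{P}$ of $b$-paths avoiding $A$. None of these ideas---the $\triangle$-system definition of popularity, the matroid $L$, the bad-edge recursion, or the construction of $H^{*}$---appears in your sketch, and they are precisely where the difficulty lies.
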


The setting of  König's Duality Theorem 
corresponds to the special case of the Matroid Intersection 
Conjecture where in each matroid every circuit has size two. The main difficulty is when we can have arbitrarily large circuits
that we might need multiple edges to span an edge and hence a single augmenting path is insufficient. Instead of popular vertices (which are 
meaningless in the matroidal context for obvious reasons) we define something that we call the `popularity matroid', which serves as a key concept 
in the proofs.

The paper is organized as follows. In the following section we introduce our notation and provide a survey of the set-theoretic and matroidal 
facts we need later. Then, in Section \ref{sec: prep}, we prove some preparatory lemmas. Finally, our main result, Theorem \ref{thm: main}, is 
proved in Section \ref{sec: main proof}.

\section*{Acknowledgment} 
We would like to thank Nathan Bowler, who listened carefully to our proofs and pointed out gaps in  previous versions.
\section{Notation and Preliminaries}
\subsection{Set theory}
We denote by $ \boldsymbol{\bigcup \mathcal{F}} $ the union of the sets in $ \mathcal{F} $. The symmetric difference $ \boldsymbol{X 
\triangle 
Y} $ is defined as $ (X\setminus Y)\cup (Y\setminus X) $. A $ \triangle $\emph{-system} $ \boldsymbol{\mathcal{D}} $ is a set of sets where 
any two elements of $ \mathcal{D} $ have the same intersection $ \boldsymbol{K} $, which is called the \emph{kernel}\footnote{All the $ 
\triangle $-systems we use will have at least two elements; thus, the kernel will be unique.} of $ \mathcal{D} 
$. The sets $ C\setminus K $ for $ C\in \mathcal{D} $ are the \emph{petals} of $ \mathcal{D} $. The 
variables $ \boldsymbol{\alpha}$, $\boldsymbol{\beta} $, and $ \boldsymbol{\gamma} $  represent
ordinal numbers, while $\boldsymbol{ \kappa} $ 
and $\boldsymbol{ \lambda} $ 
denote 
cardinals. The smallest 
limit ordinal, i.e. the set of the natural numbers is denoted by $ \boldsymbol{\omega} $.  An infinite cardinal $ \kappa $ is \emph{regular} if it is 
not the union of 
less than $ \kappa $ sets each of which has cardinality less than $ \kappa $.  Let $ \kappa $ be an uncountable regular cardinal. A set $ U\subseteq 
\kappa $ is a \emph{club} of $ \kappa $ if it 
  is unbounded in $ 
 \kappa $ and closed with respect to 
 the order 
 topology (i.e. $ \sup B:=\bigcup B \in U $ for every $ B\subseteq U $ bounded in $ \kappa $).  A set $ 
 S\subseteq \kappa $ is \emph{stationary} if $ \kappa \setminus S $ does not include a club. 
\begin{fact}[{\cite[Theorem 8.3]{jech2002set}}]\label{fact: union of non-stat}
For every uncountable regular cardinal $ \kappa $, the union of less than $ \kappa $ many non-stationary subsets of $ \kappa $ is 
non-stationary.
\end{fact}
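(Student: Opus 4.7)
The plan is to dualize the statement. A subset $S\subseteq\kappa$ is non-stationary iff $\kappa\setminus S$ contains a club, so if $S_i$ ($i<\lambda<\kappa$) are non-stationary with witnessing clubs $C_i\subseteq\kappa\setminus S_i$, then any club $D\subseteq\bigcap_{i<\lambda}C_i$ avoids $\bigcup_{i<\lambda}S_i$ and witnesses that the union is non-stationary. Hence the whole fact reduces to a single statement: the intersection of fewer than $\kappa$ clubs of $\kappa$ is again a club.

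For this reduced claim, closedness under bounded suprema is automatic since an intersection of closed sets is closed, so the substance lies in unboundedness. Given $\alpha<\kappa$, I would produce $\beta\in\bigcap_{i<\lambda}C_i$ above $\alpha$ by an $\omega$-length diagonal bootstrap. Set $\alpha_0:=\alpha$; having defined $\alpha_n<\kappa$, for each $i<\lambda$ let $\beta_{i,n}$ be the least element of $C_i$ strictly above $\alpha_n$ (which exists since $C_i$ is unbounded), and set $\alpha_{n+1}:=\sup\{\beta_{i,n}:i<\lambda\}$. Finally let $\beta:=\sup_{n<\omega}\alpha_n$. For each fixed $i<\lambda$ the sequence $(\beta_{i,n})_{n<\omega}$ is strictly increasing in $C_i$ and cofinal in $\beta$, so closedness of $C_i$ gives $\beta\in C_i$; hence $\beta\in\bigcap_{i<\lambda}C_i$ and $\beta>\alpha$, as required.

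The only thing to verify, and the main place the argument could fail, is that every ordinal produced along the way stays below $\kappa$. Regularity of $\kappa$ together with $\lambda<\kappa$ keeps each $\alpha_{n+1}$ below $\kappa$, as it is a supremum of only $\lambda$ ordinals each below $\kappa$. Uncountability of $\kappa$ (so that $\omega<\kappa$), again combined with regularity, keeps the final supremum $\beta$ below $\kappa$. Beyond this bookkeeping there is no real obstacle; this is the classical diagonal cofinal argument, as recorded in \cite[Theorem 8.3]{jech2002set}.
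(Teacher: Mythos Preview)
Your argument is correct and is exactly the standard proof: reduce to the statement that an intersection of fewer than $\kappa$ clubs is a club, then run the $\omega$-length diagonal bootstrap for unboundedness. The bookkeeping you flag (regularity of $\kappa$ for the $\lambda$-sized sup at each stage, uncountability of $\kappa$ for the final $\omega$-sup) is precisely what is needed, and your verification that $(\beta_{i,n})_{n<\omega}$ is cofinal in $\beta$ goes through since $\alpha_n<\beta_{i,n}\le\alpha_{n+1}$.

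As for comparison: the paper does not give its own proof of this fact at all. It is stated as a \texttt{fact} environment with a bare citation to Jech and no accompanying proof. So there is nothing to compare against beyond noting that what you wrote is the classical argument one finds in the cited reference.
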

\begin{lemma}[{Fodor's lemma, \cite[Theorem 8.7]{jech2002set}}]\label{lem: Fodor's lemma}
Let $ \kappa $ be an uncountable regular cardinal, let $ S\subseteq \kappa $ be stationary and let $ f: S \rightarrow \kappa $ 
be 
a regressive function, i.e. $ f(\alpha)<\alpha $ for every $ \alpha \in S \setminus \{ 0 \} $. Then there is a stationary $ 
S'\subseteq S $ such that the restriction of $ f $ to $ S' $ is a constant function.
\end{lemma}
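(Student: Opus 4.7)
The plan is a contradiction argument using the classical pressing-down / diagonal intersection technique. Suppose no such stationary $S'$ exists; then for every $\xi < \kappa$ the fibre $S_\xi := \{\alpha \in S : f(\alpha) = \xi\}$ is non-stationary, so I fix, for each $\xi$, a club $C_\xi \subseteq \kappa$ with $C_\xi \cap S_\xi = \emptyset$. I cannot just take the union of the $S_\xi$ and apply Fact~\ref{fact: union of non-stat} directly, because there are $\kappa$ many fibres and that fact only handles fewer than $\kappa$.

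The central object is the diagonal intersection
\[
\Delta := \bigl\{ \alpha < \kappa : \alpha \in C_\xi \text{ for every } \xi < \alpha \bigr\}.
\]
Granting the key claim that $\Delta$ is a club in $\kappa$, I pick some $\alpha \in S \cap \Delta$ with $\alpha > 0$, which exists because $S$ is stationary. Setting $\xi := f(\alpha) < \alpha$, the definition of $\Delta$ forces $\alpha \in C_\xi$, while simultaneously $\alpha \in S_\xi$, contradicting $C_\xi \cap S_\xi = \emptyset$.

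The main obstacle is verifying that $\Delta$ is a club. Closure of $\Delta$ follows painlessly from the closure of each $C_\xi$: for a bounded $B \subseteq \Delta$ with $\beta := \sup B$, every $\xi < \beta$ is strictly below some element of $B$, so the cofinal tail $\{\delta \in B : \delta > \xi\}$ lies entirely in $C_\xi$ by the definition of $\Delta$ and has supremum $\beta$, forcing $\beta \in C_\xi$. For unboundedness I would invoke Fact~\ref{fact: union of non-stat}: the complement of any club is non-stationary by definition, so the complement of the intersection of fewer than $\kappa$ clubs is a union of fewer than $\kappa$ non-stationary sets and hence non-stationary; in particular any such intersection is unbounded in $\kappa$. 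Starting from an arbitrary $\gamma_0 < \kappa$, I therefore choose recursively $\gamma_{n+1} > \gamma_n$ inside $\bigcap_{\xi \le \gamma_n} C_\xi$ (a family of at most $|\gamma_n|+1 < \kappa$ clubs) and set $\gamma := \sup_{n<\omega} \gamma_n$. Regularity of $\kappa$ keeps $\gamma < \kappa$, and for each $\xi < \gamma$ one has $\gamma_m \in C_\xi$ whenever $\gamma_m > \xi$, so closure of $C_\xi$ gives $\gamma \in C_\xi$. Thus $\gamma \in \Delta$ and $\gamma > \gamma_0$, completing the unboundedness check and therefore the proof.
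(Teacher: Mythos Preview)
Your argument is the standard diagonal-intersection proof of Fodor's lemma and is correct. Note, however, that the paper does not actually prove this lemma: it is stated with a citation to Jech's textbook and used as a black box, so there is no ``paper's own proof'' to compare against. Your write-up supplies what the paper deliberately outsourced.

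One small imprecision worth tightening: in the unboundedness step you write ``$\gamma_m \in C_\xi$ whenever $\gamma_m > \xi$'', but what the recursion literally gives is $\gamma_{m+1} \in C_\xi$ whenever $\xi \le \gamma_m$. This still yields a cofinal tail of the $\gamma_m$'s inside $C_\xi$, so the conclusion $\gamma \in C_\xi$ by closure is unaffected; just adjust the phrasing.
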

\begin{corollary}\label{cor: Fodor 1}
Let $ \kappa $ be an uncountable regular cardinal and for $ \alpha<\kappa $, let $ O_\alpha \subseteq \kappa \setminus (\alpha+1) 
$ be a non-stationary set. Then $ O:=\bigcup_{\alpha<\kappa}O_\alpha $ is non-stationary.
\end{corollary}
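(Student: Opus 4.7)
The plan is to suppose toward a contradiction that $O$ is stationary and derive a contradiction via Fodor's lemma (Lemma \ref{lem: Fodor's lemma}). The point is that we cannot invoke Fact \ref{fact: union of non-stat} directly, since here we are taking a union of $\kappa$ many non-stationary sets rather than fewer than $\kappa$; however, the hypothesis $O_\alpha \subseteq \kappa \setminus (\alpha+1)$ provides exactly the regressive structure needed to feed into Fodor.

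Concretely, I would define $f:O\to\kappa$ by letting $f(\beta)$ be the least ordinal $\alpha<\kappa$ with $\beta\in O_\alpha$. Since any $\beta\in O_\alpha$ satisfies $\beta\geq\alpha+1>\alpha$, the function $f$ is regressive; note also that the containment $O_\alpha\subseteq\kappa\setminus(\alpha+1)$ for every $\alpha$ forces $0\notin O$, so regressiveness holds on all of $O$ without needing to remove a point. Assuming $O$ is stationary, Fodor's lemma furnishes a stationary $S'\subseteq O$ and a fixed $\alpha^\ast<\kappa$ with $f\restriction S'\equiv\alpha^\ast$. But then $S'\subseteq O_{\alpha^\ast}$, contradicting the hypothesis that $O_{\alpha^\ast}$ is non-stationary.

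I do not anticipate any real obstacle; this is essentially the classical derivation of the ``diagonal union'' version of Fodor's lemma. The only subtlety worth a sentence in the write-up is the justification that $f$ is genuinely regressive on $O$ (rather than merely on $O\setminus\{0\}$), which is where the hypothesis $O_\alpha\subseteq\kappa\setminus(\alpha+1)$, as opposed to merely $O_\alpha\subseteq\kappa\setminus\alpha$, is used.
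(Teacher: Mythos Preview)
Your argument is correct and essentially identical to the paper's own proof: both suppose $O$ is stationary, define $f(\beta)$ to be the least $\alpha$ with $\beta\in O_\alpha$, observe that $O_\alpha\subseteq\kappa\setminus(\alpha+1)$ makes $f$ regressive, and apply Fodor's lemma to land a stationary set inside some $O_{\alpha^\ast}$. Your additional remark that $0\notin O$ is a nice touch, though the paper does not bother with it.
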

\begin{proof}
Suppose for a contradiction that $ O $ is stationary and for $ \beta \in O $, let $ f(\beta) $ be the smallest ordinal $ \alpha $ for which $ \beta \in 
O_\alpha $. Note that $ f(\beta)<\beta $ because $ O_\alpha \subseteq \kappa \setminus (\alpha+1) 
$ for each $ \alpha $. Thus by Fodor's lemma there is an $ \alpha $ such that $ f(\beta)=\alpha $ for stationarily many $ \beta 
$, which implies that $ 
O_\alpha $ is stationary, a contradiction.
\end{proof}

\subsection{Digraphs}
By a digraph $ D $ we mean a set of ordered pairs that we call the \emph{arcs} of $ D $ (while the vertex set of all the digraphs will be the edge 
set $ 
E $ 
of some matroids). For $ F\subseteq E $, we write $ \boldsymbol{\mathsf{in}_D(F)} $ and $ \boldsymbol{\mathsf{out}_D(F)} $, for the 
\emph{in-neighbours} and \emph{out-neighbours} of $ F 
$ in $ D $ respectively. This means that $ \mathsf{in}_D(F) $ consists of those $ e\in E\setminus F $ for which there exists an $ f\in F $ such that
$ ef\in D $. The set $ \mathsf{out}_D(F) $ is defined analogously. The \emph{in- and out-degree} of $ e $ in $ D $ is $ 
\left|\mathsf{in}_D(e)\right| $ and $ \left|\mathsf{out}_D(e)\right| $ respectively. By a \emph{path} in a digraph we always mean a finite 
directed path. For a path $ P $, we write $ 
\boldsymbol{\mathsf{in}(P)} $ and $ \boldsymbol{\mathsf{ter}(P)} $ for the initial and terminal vertex of $ P $ respectively. 
For a set $ 
\mathcal{P} $ of paths let $\boldsymbol{ \mathsf{in}(\mathcal{P})}:=\{ \mathsf{in}(P):\ P \in \mathcal{P} \} $ and $\boldsymbol{ 
\mathsf{ter}(\mathcal{P})}:=\{ \mathsf{ter}(P):\ P \in \mathcal{P} \} $.

\subsection{Matroids}\label{subsec: matroids}
A pair ${M=(E,\mathcal{I})}$ is a \emph{finitary matroid} 
if ${\mathcal{I} \subseteq \mathcal{P}(E)}$ satisfies  the following axioms: 
\begin{enumerate}
[label=(\Roman*)]
\item\label{item: matroid axiom1} $ \emptyset\in  \mathcal{I} $;
\item\label{item: matroid axiom2} If $I\in \mathcal{I} $ and $ J\subseteq I $, then $ J\in \mathcal{I} $;
\item\label{item: matroid axiom3} If $ I,J\in \mathcal{I} $ are finite with $ \left|I\right|<\left|J\right| $, then there exists an $  e\in J\setminus I $ 
for which
$ I\cup \{ e \}\in \mathcal{I} $;
\item\label{item: matroid axiom4} If all finite subsets of  a set $ I\subseteq E $ are in $ \mathcal{I} $, then $ I\in \mathcal{I} $.
\end{enumerate}

We refer to $ E $ as the edge set or ground set of the matroid.
The sets in~$\mathcal{I}$ are called \emph{independent} and the sets $ \mathcal{P}(E)\setminus \mathcal{I} $ are \emph{dependent}. We write 
$ \boldsymbol{\mathcal{I}(M)} $ if the matroid is not clear from the context. The  maximal independent 
sets are the \emph{bases}, while the minimal dependent sets are the \emph{circuits} of a matroid. We write $ \boldsymbol{\mathcal{B}(M)} $ 
and $ 
\boldsymbol{\mathcal{C}(M)} $ for 
 the bases and the circuits of $ M $ respectively. The bases of a finitary matroid $ M $ have the same size, which is the 
\emph{rank} of $ M $ and is denoted by $ \boldsymbol{r(M)} $. A singleton circuit is called a \emph{loop}. A set $ \mathcal{C} $ of sets 
satisfies the \emph{circuit elimination axiom} 
if 
for every 
distinct $ C_0, C_1\in \mathcal{C} $, and $ e\in C_0 \cap C_1 $ there is a $ C\in \mathcal{C} $ with $ C\subseteq (C_0\cup C_1)\setminus \{ e \} 
$. Moreover, $ \mathcal{C} $ satisfies the \emph{strong circuit elimination axiom} if for every 
 $ C_0, C_1\in \mathcal{C} $, $ e\in C_0 \cap C_1 $, and $ f\in C_0 \triangle C_1 $ there is a $ C\in \mathcal{C} $ with $f\in C\subseteq 
 (C_0\cup C_1)\setminus \{ e \} $.  A set $ \mathcal{C} $ of finite subsets of $ E $ is the set of the circuits of a matroid iff $ \emptyset \notin 
 \mathcal{C} $, the elements of $ \mathcal{C} $ are pairwise $ \subseteq $-incomparable, and $ \mathcal{C} $ satisfies the circuit elimination 
 axiom. These are called the \emph{circuit axioms} and provide an axiomatization of matroids in terms of circuits. The 
 circuit 
 elimination axiom
 implies the strong circuit elimination axiom under the assumption that the elements of $ \mathcal{C} $ are $ \subseteq 
 $-incomparable.

 For an  ${X \subseteq E}$, ${\boldsymbol{M  \!\!\upharpoonright\!\! X} 
:=(X,\mathcal{I} 
\cap \mathcal{P}(X))}$ is a matroid and it  is called the \emph{restriction} of~$M$ to~$X$. Let  $ \boldsymbol{r_M(X)}:=r(M 
\!\!\upharpoonright\!\! X) $.
We write ${\boldsymbol{M \setminus X}}$ for $ M  \!\!\upharpoonright\!\! (E\setminus X) $. Let $ B $ be a base of $ M  \!\!\upharpoonright\!\! 
X $.
The \emph{contraction} of $ X $ in $ M $ is the matroid 
$\boldsymbol{M/X}=(E\setminus X, \mathcal{I}')$ where $ I \in \mathcal{I}' $ iff $ B\cup I \in \mathcal{I} $. This is well defined, i.e. $ 
\mathcal{I}' $ does not depend on the choice of $ B $. Let $\boldsymbol{M.X}:= M/(E\setminus X) $.   
If $ I $ is  independent in $ M $  but $ I\cup \{ e \} $ is dependent for some $ e\in E\setminus I $,  then there is a unique 
circuit   $ \boldsymbol{C_M(e,I)} $ of $ M $ through $ e $ included in $I\cup \{ e \} $ which is called the \emph{fundamental circuit} of $ e $ on 
$ I $.  We say~${X 
\subseteq E}$ \emph{spans}~${e \in E}$ in matroid~$M$ if either~${e \in X}$ or there exists a circuit~${C 
\ni e}$ with~${C\setminus \{ e \} \subseteq X}$. 
By letting $\boldsymbol{\mathsf{span}_{M}(X)}$ be the set of edges spanned by~$X$ in~$M$, we obtain a finitary closure 
operator
$ \mathsf{span}_{M}: \mathcal{P}(E)\rightarrow \mathcal{P}(E) $. 
An ${S \subseteq E}$ is \emph{spanning} in~$M$ if~${\mathsf{span}_{M}(S) = E}$.  For $ I\in \mathcal{I}(M) $ and 
 $ X\subseteq 
\mathsf{span}_M(I) $, let $ \boldsymbol{G_{M}(X,I)} $ be the 
\emph{generator} of $ X $ 
w.r.t. $ I $, 
i.e. the unique 
minimal $ I'\subseteq I $ that $ M $-spans $ X $.  If $ X $ is a singleton $ \{ e \} $, we leave the brackets. Note that $ 
G_{M}(e,I) $ is $ \{ e \} $ iff $ e\in I $, and $ C_M(e,I)\setminus \{ e \} $ otherwise. In general $ G_{M}(X,I)=(X \cap I)\cup \bigcup_{e\in 
X\setminus I}C_M(e,I)\setminus \{ e \}  $. 

\begin{claim}\label{clm: arc from circle}
If $ C\in \mathcal{C}(M) $ and $ B\in \mathcal{B}(M) $, then for every $ f\in C\cap B $ there is an $ e\in 
C\setminus B $ such that $ f\in C_M(e,B) $.
\end{claim}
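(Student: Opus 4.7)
The plan is to exploit the tension between two facts: $B - f$ does not span $f$ (since $B$ is independent), while $C - f$ does span $f$ (since $C$ is a circuit through $f$). I will use this to locate a witness $e \in C \setminus B$ outside $\mathsf{span}_M(B - f)$, and then argue that its fundamental circuit on $B$ must contain $f$.

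First I would observe that $f \notin \mathsf{span}_M(B \setminus \{f\})$ because $B$ is independent. On the other hand $C \setminus \{f\}$ spans $f$ in $M$, because $C$ is a circuit. Composing closures, we get $C \setminus \{f\} \not\subseteq \mathsf{span}_M(B \setminus \{f\})$, since otherwise $f$ would also lie in $\mathsf{span}_M(B \setminus \{f\})$. Therefore I can pick some $e \in C \setminus \{f\}$ with $e \notin \mathsf{span}_M(B \setminus \{f\})$. Any such $e$ automatically lies in $C \setminus B$: if it lay in $B$ it would lie in $B \setminus \{f\}$, hence in $\mathsf{span}_M(B \setminus \{f\})$, a contradiction.

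Next, since $B$ is a base we have $e \in \mathsf{span}_M(B)$, so the fundamental circuit $C_M(e, B) \subseteq B \cup \{e\}$ is well defined. The key step is to show $f \in C_M(e, B)$. Suppose not; then $C_M(e, B) \setminus \{e\} \subseteq B \setminus \{f\}$, and since $C_M(e, B) \setminus \{e\}$ spans $e$ we would obtain $e \in \mathsf{span}_M(B \setminus \{f\})$, contradicting the choice of $e$. Hence $f \in C_M(e, B)$, as required.

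There is no real obstacle here: the argument is just a two-line application of the monotonicity and idempotence of $\mathsf{span}_M$ together with the definition of a fundamental circuit. The only point to double-check is that every step is valid in the finitary-matroid setting, which it is, since all of the operations used ($\mathsf{span}_M$ and fundamental circuits on a base) are defined in Subsection \ref{subsec: matroids}.
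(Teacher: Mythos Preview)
Your proof is correct and follows essentially the same idea as the paper's: both rely on the fact that $B\setminus\{f\}$ does not span $f$ while $C\setminus\{f\}$ does. The paper phrases this via a global contradiction using the generator $G_M(C\setminus\{f\},B)$, whereas you directly locate a single witness $e\in C\setminus\{f\}$ outside $\mathsf{span}_M(B\setminus\{f\})$; these are contrapositive formulations of the same argument.
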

\begin{proof}
Suppose for a contradiction that for $ f\in C\cap B $ there is no suitable $ e $. Then $ G_M(C\setminus \{ f \}, B) $ does not contain $ f $, yet it 
spans 
$ f $, which is a contradiction because it means that  $ G_M(C, B)\subseteq B \in \mathcal{I}(M)$ 
includes a circuit through $ f $.
\end{proof}

An $ S\subseteq E $ is called a \emph{scrawl} of $ M $ if $ S $ is the union of $ M $-circuits.
\begin{lemma}\label{lem: scrawl}
Let  $ \mathcal{F} $ be a set of finite nonempty subsets of $ E $ that satisfies the strong circuit elimination axiom. Then the set $ \mathcal{C} 
$ of the minimal elements of $ 
\mathcal{F} $ is the set of the circuits of a finitary matroid $ M $ on $ E $ and each $ F\in \mathcal{F} $ is a scrawl of $ M $.
\end{lemma}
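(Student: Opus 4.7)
The plan is to first verify that $\mathcal{C}$ satisfies the circuit axioms, producing a finitary matroid $M$ whose circuits are precisely $\mathcal{C}$, and then show separately that every $F\in\mathcal{F}$ is a union of $M$-circuits. By definition, the members of $\mathcal{C}$ are nonempty (since every member of $\mathcal{F}$ is) and pairwise $\subseteq$-incomparable, so only the circuit elimination axiom remains. Given distinct $C_0,C_1\in\mathcal{C}$ and $e\in C_0\cap C_1$, incomparability provides some $f\in C_0\triangle C_1$, so applying the strong circuit elimination axiom to $\mathcal{F}$ yields an $F'\in\mathcal{F}$ with $F'\subseteq (C_0\cup C_1)\setminus\{e\}$. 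Because $F'$ is finite, it contains a $\subseteq$-minimal element of $\mathcal{F}$, and any such minimal element lies in $\mathcal{C}$ and witnesses the axiom.

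For the scrawl claim I fix $F\in\mathcal{F}$ and $f\in F$ and aim to produce a $C\in\mathcal{C}$ with $f\in C\subseteq F$; taking the union of such $C$ as $f$ ranges over $F$ then exhibits $F$ as a scrawl. The natural candidate is a $\subseteq$-minimal element $C$ of $\{G\in\mathcal{F}:f\in G\subseteq F\}$, which exists because this family contains $F$ and all of its members are finite. The crux is the claim that this constrained minimum is already unconstrained, i.e.\ $C\in\mathcal{C}$. Suppose some $C'\in\mathcal{F}$ satisfies $C'\subsetneq C$; then the defining minimality of $C$ forces $f\notin C'$, so $f\in C\setminus C'\subseteq C\triangle C'$. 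Picking any $e\in C'=C\cap C'$ (nonempty because members of $\mathcal{F}$ are nonempty) and applying the strong circuit elimination axiom to $C,C'$, I obtain $C''\in\mathcal{F}$ with $f\in C''\subseteq (C\cup C')\setminus\{e\}=C\setminus\{e\}\subsetneq C$. Since $C''$ still lies in $\{G\in\mathcal{F}:f\in G\subseteq F\}$, this contradicts the choice of $C$, so $C\in\mathcal{C}$ as desired.

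I expect the second part to be the main obstacle: it is not a priori clear that a minimal element of $\mathcal{F}$ subject to containing a prescribed $f$ is also minimal in $\mathcal{F}$ without that constraint. The device that resolves this is exactly the $f$-preserving conclusion of the \emph{strong} circuit elimination axiom, which allows one to strictly shrink $C$ while keeping $f$ inside; the ordinary circuit elimination axiom would not be sufficient here. The first part, by contrast, is a routine combination of strong circuit elimination with the finiteness of the members of $\mathcal{F}$.
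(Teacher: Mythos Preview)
Your proof is correct and follows essentially the same approach as the paper's: for the scrawl part you both take a $\subseteq$-minimal $C\in\mathcal{F}$ with $f\in C\subseteq F$, assume a proper $C'\in\mathcal{F}$ sits below it (necessarily missing $f$), and use the $f$-preserving conclusion of strong circuit elimination to contradict minimality. Your treatment of the first part is slightly more explicit than the paper's ``straightforward to check'', but the argument is the same.
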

\begin{proof}
It is straightforward to check that $ \mathcal{C} $ satisfies the circuit axioms. To prove the second part, let $ F\in \mathcal{F} $ and $ f\in F $ 
be arbitrary. We need to 
find a $ C\in \mathcal{C} $ with $ f\in C \subseteq F $. Let $ C\in \mathcal{F} $ be  minimal 
under the condition that $ f\in C \subseteq F $. Suppose for a contradiction that $ C\notin \mathcal{C} $. Then there is a $ 
G\in \mathcal{F} $ with $ G\subseteq 
C\setminus \{ f \} $ . Let $ e\in G $ be arbitrary. By applying the strong circuit elimination axiom with $ C $, $ G $, $ e $, 
and $ f $, we 
obtain a $ H\in \mathcal{F} $ with $ f\in H \subseteq C\setminus \{  e \}  $. This contradicts the minimality of $ C $. 
\end{proof}

Let  $ \boldsymbol{(M,N)} $ be an ordered pair of finitary matroids on a common edge set $ E $.  An $ (M,N) $\emph{-hindrance} 
(\cite[Definition 3.2]{aharoni1998intersection}) is an $ H\subseteq E $ with  $H\in \mathcal{I}(N.\mathsf{span}_M(H)) \setminus 
\mathcal{B}(N.\mathsf{span}_M(H)) $. 
We call $ (M,N) $ 
\emph{hindered} if there exists an $ (M,N) $-hindrance.

\begin{observation}\label{obs:  hindrance minor}
For every $ A\subseteq E $, every $ (M\setminus A, N/A) $-hindrance is an $ (M,N) $-hindrance as well.
\end{observation}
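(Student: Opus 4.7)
My plan is to reduce the claim to two routine facts about matroid minors and then to unfold the definition of a hindrance.

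Let $H \subseteq E \setminus A$ be an $(M \setminus A, N/A)$-hindrance and set $S := \mathsf{span}_M(H)$. I would first observe that $\mathsf{span}_{M \setminus A}(H) = S \setminus A$: since $H \subseteq E \setminus A$, every $M$-circuit contained in $H \cup \{e\}$ with $e \in E \setminus A$ lies entirely in $E \setminus A$ and hence is also a circuit of $M \setminus A$; the reverse inclusion is immediate.

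The central step is the commutation identity
\[
(N/A).(S \setminus A) \;=\; (N.S)/(S \cap A),
\]
which holds because both matroids have ground set $S \setminus A$ and both arise from $N$ by contracting the same set, namely $E \setminus (S \setminus A)$. This is the standard fact that restriction and contraction of a matroid to disjoint sets commute.

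Finally, from the hypothesis that $H$ is independent and non-spanning in $(N/A).(S \setminus A)$, equivalently in $(N.S)/(S \cap A)$, I would deduce the same for $N.S$. Independence pulls back through contraction: if $B$ is a base of $(N.S)\!\!\upharpoonright\!\! (S \cap A)$, then $H \cup B$ is $N.S$-independent, hence so is $H$. For non-spanning, pick $e \in S \setminus A$ with $e \notin \mathsf{span}_{(N.S)/(S \cap A)}(H)$ and use the identity $\mathsf{span}_{K/X}(Y) = \mathsf{span}_{K}(Y \cup X) \setminus X$ with $K = N.S$ and $X = S \cap A$: this gives $e \notin \mathsf{span}_{N.S}(H \cup (S \cap A))$, so \emph{a fortiori} $e \notin \mathsf{span}_{N.S}(H)$. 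Hence $H$ is an $(M,N)$-hindrance. I do not anticipate any real obstacle; the whole argument is routine bookkeeping with matroid minors, and the only mild care is needed for verifying the commutation identity in the second step.
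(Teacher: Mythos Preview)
Your proof is correct and is essentially the same as the paper's: the paper states the identity $(N/A).\mathsf{span}_{M\setminus A}(H)=N.\mathsf{span}_{M\setminus A}(H)$ and then observes that $N.\mathsf{span}_{M\setminus A}(H)$ is a contraction minor of $N.\mathsf{span}_M(H)$, which is exactly your commutation $(N/A).(S\setminus A)=(N.S)/(S\cap A)$ together with your pullback of independence and non-spanning through contraction. Your write-up is merely a more explicit unpacking of the same argument.
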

\begin{proof}
Let $ H $ be an $ (M\setminus A, N/A) $-hindrance. This means that  $ H $ is independent but not spanning 
in $(N/A).\mathsf{span}_{M\setminus A}(H)=N.\mathsf{span}_{M\setminus A}(H) $.  Since $ 
N.\mathsf{span}_{M\setminus A}(H) $ is a contraction minor of $ 
N.\mathsf{span}_M(H) $, the assumption that $ H $ is independent but not spanning in $ N.\mathsf{span}_{M\setminus A}(H)  $ implies the 
same 
for $ N.\mathsf{span}_M(H) $.
\end{proof}

The following lemma is a well-known consequence of the classical `augmenting path'-method. Informally, it says that for a given common 
independent set $ I $, we either find a slightly ``better'' common independent set  (by changing along an augmenting path in a 
certain auxiliary 
digraph)  or there is a bipartition of $ E $ that together with $ I $ satisfies 
the conditions of
Matroid Intersection Conjecture. It is discussed in detail, for example, in 
\cite[Section 3]{joo2021MIC}.
\begin{lemma}[Augmenting path lemma]\label{lem: no aug hindrance}
For every $ I\in \mathcal{I}(M)\cap \mathcal{I}(N)$ there is either a $ J\in \mathcal{I}(M)\cap \mathcal{I}(N) $ with $ r_{M/I}(J\setminus 
I)=r_{N/I}(J\setminus I)=1 $ or a partition $ E=E_M\cup E_N $ such that $ I\cap E_M $ spans $ E_M $ in $ M $ and $ I\cap E_N $ spans 
$ E_N $ in $ N$. If the second case occurs, and $ I $ does not span $ N $, then $ I \cap E_M $ constitutes an $ (M,N) $-hindrance
\end{lemma}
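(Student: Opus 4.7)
The plan is the classical augmenting-path construction for matroid intersection, which works verbatim in the finitary setting because fundamental circuits, and hence augmenting paths, are finite. Associate to $I$ the exchange digraph $D$ on vertex set $E$: for $f\in I$ and $e\in E\setminus I$ put an \emph{$M$-arc} $f\to e$ whenever $f\in C_M(e,I)$, and an \emph{$N$-arc} $e\to f$ whenever $f\in C_N(e,I)$. Let $F_M:=\{e\in E\setminus I: I\cup\{e\}\in\mathcal{I}(M)\}$, $F_N:=\{e\in E\setminus I: I\cup\{e\}\in\mathcal{I}(N)\}$, and let $R$ be the set of vertices reachable in $D$ from $F_M$ (including $F_M$ itself). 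The proof splits on whether $R$ meets $F_N$.

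Case 1, $R\cap F_N\neq\emptyset$. Take a shortest directed path $P=e_0f_1e_1\dots f_ke_k$ from some $e_0\in F_M$ to some $e_k\in F_N$ (the case $k=0$ being $e_0=e_k\in F_M\cap F_N$), and set $J:=(I\setminus\{f_1,\dots,f_k\})\cup\{e_0,\dots,e_k\}$. The standard shortest-path argument---exploiting the absence of chords in $P$ together with iterated applications of the strong circuit elimination axiom inside $M$ and inside $N$ separately---yields $J\in\mathcal{I}(M)\cap\mathcal{I}(N)$. Each $e_i$ with $i\geq 1$ lies in $\mathsf{span}_M(I)$ because the $M$-arc $f_i\to e_i$ witnesses $f_i\in C_M(e_i,I)$, so only $e_0$ escapes $\mathsf{span}_M(I)$; symmetrically only $e_k$ escapes $\mathsf{span}_N(I)$. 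Consequently $r_M(I\cup(J\setminus I))=r_N(I\cup(J\setminus I))=|I|+1$ and therefore $r_{M/I}(J\setminus I)=r_{N/I}(J\setminus I)=1$.

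Case 2, $R\cap F_N=\emptyset$. Put $E_N:=R$ and $E_M:=E\setminus R$. If $e\in E_M\setminus I$, then $e\notin F_M\subseteq R$, so $C_M(e,I)$ exists; any $f\in C_M(e,I)$ with $f\in R$ would yield, via the $M$-arc $f\to e$, the reach $e\in R$---contradiction---whence $C_M(e,I)\setminus\{e\}\subseteq I\cap E_M$ and $I\cap E_M$ $M$-spans $e$. If $e\in E_N\setminus I$, then $e\in R\setminus F_N$, so $C_N(e,I)$ exists, and each $f\in C_N(e,I)$ is reached from $e\in R$ via the arc $e\to f$, so $f\in R=E_N$; thus $I\cap E_N$ $N$-spans $e$. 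For the final clause, assume further that $I$ does not $N$-span $E$ and pick $e\in E\setminus\mathsf{span}_N(I)$; then $e\in F_N\subseteq E_M$. Put $H:=I\cap E_M$. Since $I\cap E_M$ $M$-spans $E_M$ one has $E\setminus\mathsf{span}_M(H)\subseteq E_N$; no element of $I\cap E_N$ is in $\mathsf{span}_M(H)$ (else $I$ would contain an $M$-circuit inside $H\cup\{f\}$); and $I\cap E_N$ $N$-spans $E_N$. Hence $I\cap E_N$ is an $N$-basis of $E\setminus\mathsf{span}_M(H)$, and $H\cup(I\cap E_N)=I$ being $N$-independent shows that $H$ is $N.\mathsf{span}_M(H)$-independent; since $I$ does not $N$-span $E$, $H$ is not $N.\mathsf{span}_M(H)$-spanning either, so $H$ is an $(M,N)$-hindrance.

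The main technical obstacle is Case 1's simultaneous verification of $M$- and $N$-independence of $J$; the shortest-path hypothesis is essential, and the arguments rely on iterated strong circuit elimination inside each matroid (the two independences being established separately). The partition analysis in Case 2 and the hindrance conclusion are then direct inspections of the exchange digraph together with the definitions of contraction and span.
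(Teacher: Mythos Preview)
Your argument is the standard augmenting-path proof and is correct; the paper itself does not supply a proof of this lemma but simply cites it as well known (referring to \cite[Section~3]{joo2021MIC}), so there is nothing to compare against beyond confirming that you have reproduced the classical argument. One cosmetic slip: the line $r_M(I\cup(J\setminus I))=|I|+1$ is meaningless when $I$ is infinite, but your actual conclusion $r_{M/I}(J\setminus I)=r_{N/I}(J\setminus I)=1$ follows directly from the observation that $e_1,\dots,e_k$ are $M/I$-loops and $e_0,\dots,e_{k-1}$ are $N/I$-loops, so you can simply delete the intermediate rank equality.
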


\section{Preparations}\label{sec: prep}
Let an ordered pair $ (M,N) $ of finitary matroids on $ E $ be fixed in this section.
\subsection{The changes of a fundamental circuit}\label{subsec: fund circ}
\begin{lemma}
Suppose that $ C_0, C_1 \in \mathcal{C}(M) $, $ f\in C_0 \cap C_1 $, $ e_i\in C_i\setminus C_{1-i} $ for $ i\in \{ 0,1 \} $, 
and $ (C_0\cup C_1) 
\setminus \{ e_0, e_1 \}\in \mathcal{I}(M)$. Then there is a unique circuit $ C\subseteq (C_0\cup C_1)\setminus \{ f \} $. 
Furthermore, 
$ C_0 \triangle C_1 \subseteq C $ holds for this $ C $.
\end{lemma}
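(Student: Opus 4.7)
The plan is to combine strong circuit elimination (which is available since circuits are $\subseteq$-incomparable) with the observation that, thanks to the independence hypothesis, $C_0$ and $C_1$ are themselves the only circuits in $C_0 \cup C_1$ that avoid one of $e_0, e_1$.

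First, for existence, I would simply apply the strong circuit elimination axiom to $C_0, C_1$ with the eliminated element $f \in C_0 \cap C_1$ and the forced element $e_0 \in C_0 \triangle C_1$. This immediately produces a circuit $C$ with $e_0 \in C \subseteq (C_0 \cup C_1) \setminus \{f\}$.

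Next, I would establish the structural fact underlying uniqueness. Set $I := (C_0 \cup C_1) \setminus \{e_0, e_1\}$, which is independent by assumption. Since $f \in C_0 \cap C_1$ and the $e_i$ lie in distinct $C_i$, neither $e_0$ nor $e_1$ equals $f$, so $f \in I$. Because $C_1 \setminus \{e_1\} \subseteq I$ and $C_1$ is a circuit through $e_1$, we have $C_M(e_1, I) = C_1$, and symmetrically $C_M(e_0, I) = C_0$. Consequently, any circuit contained in $C_0 \cup C_1 = I \cup \{e_0, e_1\}$ either contains both $e_0$ and $e_1$, or coincides with $C_0$ or $C_1$. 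In particular, every circuit living in $(C_0 \cup C_1) \setminus \{f\}$ must contain both $e_0$ and $e_1$, since $f \in C_0 \cap C_1$ excludes the two distinguished circuits.

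For uniqueness, suppose for contradiction that $C \neq C'$ are two circuits inside $(C_0 \cup C_1) \setminus \{f\}$. Both contain $e_0$ by the previous paragraph, so circuit elimination yields a circuit $C'' \subseteq (C \cup C') \setminus \{e_0\} \subseteq (C_0 \cup C_1) \setminus \{e_0, f\}$. This $C''$ avoids $e_0$, so by the trichotomy it must equal $C_1$; but $C_1 \ni f$ while $C''$ avoids $f$, a contradiction. Finally, for the inclusion $C_0 \triangle C_1 \subseteq C$, pick any $g \in C_0 \triangle C_1$ and apply strong circuit elimination to $C_0, C_1$ again, this time with eliminated element $f$ and forced element $g$; the resulting circuit through $g$ lies in $(C_0 \cup C_1) \setminus \{f\}$ and hence equals $C$ by uniqueness.

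I do not expect a serious obstacle; the only thing to watch is that the uniqueness argument really uses both the independence hypothesis (to pin down fundamental circuits of $e_0, e_1$ on $I$) and the fact that $f \in I$ (to reach the contradiction $f \in C_1 = C''$). Once these are in place, the proof is a short, clean application of (strong) circuit elimination.
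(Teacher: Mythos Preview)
Your proof is correct and shares the paper's core observation: with $I=(C_0\cup C_1)\setminus\{e_0,e_1\}$ independent, $C_i$ is the fundamental circuit of $e_i$ on $I$, so any circuit inside $(C_0\cup C_1)\setminus\{f\}$ must contain both $e_0$ and $e_1$.

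Where you diverge is in the final two steps. For uniqueness, the paper observes that any such circuit is the fundamental circuit of $e_0$ on the independent set $(I\setminus\{f\})\cup\{e_1\}$, which pins it down immediately; you instead eliminate $e_0$ from two hypothetical circuits and derive a contradiction via the trichotomy. For the inclusion $C_0\triangle C_1\subseteq C$, the paper re-runs the hypotheses with $e_i$ replaced by an arbitrary $e_i'\in C_i\setminus C_{1-i}$ (noting $I'=I\cup\{e_i\}\setminus\{e_i'\}$ is still independent and $C$ is unchanged), whereas you apply strong circuit elimination once more with the forced element $g$ and invoke uniqueness. Your route is arguably cleaner for this last part, since it avoids re-checking the hypotheses; the paper's route has the mild advantage of never needing the \emph{strong} form of circuit elimination.
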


\begin{proof}
The existence of a circuit $ C\subseteq (C_0\cup C_1)\setminus \{ f \} $ follows by circuit elimination. First we show that 
any such  $ C $ includes $ \{ e_0, e_1 \} $. Since $ C \cap \{ e_0, e_1 \}=\emptyset  $ is impossible due to the assumption $ I:=(C_0\cup C_1) 
\setminus \{ e_0, e_1 \}\in \mathcal{I}(M)$, we may assume by symmetry that $ e_0 \in C $. Suppose for a contradiction 
that $ e_1\notin C $. Then $ C $ is the fundamental circuit of $ e_0 $ on $ I $ and hence $ C=C_0 $ because  $ C_0 $ is as well. 
This is a contradiction because $ f \in C_0 \setminus C $. Thus every circuit $ C\subseteq (C_0\cup C_1)\setminus \{ f \} $ 
includes $ \{ e_0, e_1 \} $ and hence it is the fundamental circuit of $ e_0 $ on $ I\setminus \{ f \}\cup \{ e_1 \} $. 
Therefore $ C $ is unique. 

It remains to show $ C_0 \triangle C_1 \subseteq C $. Suppose that $ e_i'\in C_i\setminus C_{1-i} 
$ is 
distinct from $ e_i $. Then $I':= I \cup \{ e_i \} \setminus \{ e_i' \} $ is independent, thus the conditions of the lemma are intact if 
we replace $ I $ by $ I' $ and  $ e_i $ by $ e_i' $. Since $ C_0, C_1 $, and $ f $ are unchanged, so is the unique $ C $ included in $ (C_0\cup 
C_1)\setminus \{ f \} $ and we have seen that $ C $
must include $ \{ e_i', e_{1-i} \} $.
\end{proof}

The previous lemma will be used to describe how fundamental circuits change after a base exchange. Let us restate it in the following form:
\begin{corollary}\label{cor: fundamental change}
Suppose that $ B\in \mathcal{B}(M) $, $ e_0, e_1\in E\setminus B $  are distinct, and $ f\in C_M(e_0, B) \cap 
C_M(e_1, B) $. Let $ B':= B\cup \{ 
e_0 \}\setminus \{ f \} $. Then \[ C_M(e_0, B) \triangle C_M(e_1, B)  \subseteq C_M(e_1, B')\subseteq   C_M(e_0, B) \cup 
C_M(e_1, B)\setminus \{ f \}. \]
\end{corollary}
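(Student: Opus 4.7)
The plan is to deduce the corollary by a direct application of the preceding lemma to the circuits $C_0 := C_M(e_0,B)$ and $C_1 := C_M(e_1,B)$. First I would verify the hypotheses of that lemma: the element $f$ lies in $C_0 \cap C_1$ by assumption; since each fundamental circuit $C_M(e_i,B)$ meets $E\setminus B$ only in $e_i$ and $e_0\neq e_1$, we have $e_i \in C_i\setminus C_{1-i}$; and $(C_0\cup C_1)\setminus\{e_0,e_1\} \subseteq B$, hence is independent. The lemma therefore produces a unique circuit $C$ with $C\subseteq (C_0\cup C_1)\setminus\{f\}$, and guarantees $C_0\triangle C_1 \subseteq C$.

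The main task is then to identify this circuit $C$ with $C_M(e_1,B')$. Observe that $B' = B\cup\{e_0\}\setminus\{f\}$ is a base of $M$ by the base exchange property, and $e_1\notin B'$ since $e_1\neq e_0$ and $e_1\notin B$. The inclusion $C\subseteq (C_0\cup C_1)\setminus\{f\} \subseteq B\cup\{e_0,e_1\}\setminus\{f\} = B'\cup\{e_1\}$ shows that $C$ lies in the candidate set for a fundamental circuit on $B'$. From the proof of the lemma we already know that $\{e_0,e_1\}\subseteq C$, so in particular $e_1\in C$. Since $C_M(e_1,B')$ is the unique circuit through $e_1$ contained in $B'\cup\{e_1\}$, we obtain $C = C_M(e_1,B')$.

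Substituting this identification back into the lemma's two conclusions yields exactly the claimed chain
\[ C_M(e_0,B)\triangle C_M(e_1,B) \subseteq C_M(e_1,B') \subseteq C_M(e_0,B)\cup C_M(e_1,B)\setminus\{f\}. \]
I do not anticipate any real obstacle: the content is entirely packaged in the previous lemma, and the only genuine step is the recognition that the abstract circuit $C$ produced there is forced to be the fundamental circuit of $e_1$ on the exchanged base $B'$, which follows from uniqueness together with the fact that $e_1\in C$ is extracted from the lemma's own proof.
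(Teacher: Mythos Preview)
Your proposal is correct and matches the paper's approach, which simply presents the corollary as a restatement of the preceding lemma with $C_0=C_M(e_0,B)$, $C_1=C_M(e_1,B)$. One cosmetic remark: you do not need to dip into the lemma's proof to get $e_1\in C$, since $e_1\in C_1\setminus C_0\subseteq C_0\triangle C_1\subseteq C$ already follows from the lemma's conclusion.
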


The next lemma will be useful in the proof of our key lemma (Lemma \ref{lem: key}).
\begin{lemma}\label{lem: circuit eliminate F}
Assume that  $ B\in \mathcal{B}(M) $, $ e_0,\dots, e_{n}\in E \setminus B $ are pairwise distinct and $ F \subseteq 
B $ with $ \left|F\right|\leq n $. Then there is an $ i^{*}\leq n $ and a 
$ C\in \mathcal{C}(M) $ with  $e_{i^{*}} \in C\subseteq 
\bigcup_{i^{*}\leq i \leq n}C_M(e_i, B)\setminus F $.
\end{lemma}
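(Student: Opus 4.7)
The plan is to induct on $n$. The base case $n = 0$ forces $F = \emptyset$, so $i^* := 0$ and $C := C_M(e_0, B)$ trivially work.

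For the inductive step, I would split on whether $F$ meets $C_M(e_n, B)$. If $F \cap C_M(e_n, B) = \emptyset$, then $i^* := n$ and $C := C_M(e_n, B)$ finish the job immediately. Otherwise, pick any $f \in F \cap C_M(e_n, B)$ and perform the base exchange $B' := B \cup \{e_n\} \setminus \{f\}$, which is again a base of $M$ (since $f \in C_M(e_n, B)$). The elements $e_0,\ldots,e_{n-1}$ still lie outside $B'$ and $F' := F \setminus \{f\}$ has $|F'| \leq n-1$, so the inductive hypothesis applied to $B'$, the truncated sequence $e_0,\ldots,e_{n-1}$, and $F'$ yields some $i^* \leq n-1$ and $C \in \mathcal{C}(M)$ with $e_{i^*} \in C \subseteq \bigcup_{i^* \leq i \leq n-1} C_M(e_i,B') \setminus F'$.

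What remains is to translate fundamental circuits with respect to $B'$ back into ones with respect to $B$. Corollary \ref{cor: fundamental change}, applied with its $e_0, e_1$ playing the roles of $e_n$ and $e_i$, handles the case $f \in C_M(e_i,B)$, giving $C_M(e_i,B') \subseteq C_M(e_i,B) \cup C_M(e_n,B) \setminus \{f\}$. The case $f \notin C_M(e_i,B)$ is immediate: then $C_M(e_i,B) \subseteq B' \cup \{e_i\}$, so uniqueness of fundamental circuits forces $C_M(e_i,B') = C_M(e_i,B)$, which is contained in the same upper bound. Unioning these containments over $i^* \leq i \leq n-1$ and subtracting $F' = F \setminus \{f\}$ (rather than $F$) then gives $C \subseteq \bigcup_{i^* \leq i \leq n} C_M(e_i,B) \setminus F$, completing the step.

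The only substantive ingredient is Corollary \ref{cor: fundamental change}, which controls how fundamental circuits transform under a single base exchange; everything else is bookkeeping on indices and set inclusions, so I do not foresee any serious obstacle beyond getting the two-sided inclusion in that corollary to line up with the chosen exchange.
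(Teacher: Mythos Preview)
Your proof is correct and follows essentially the same route as the paper's: induction on $n$, a case split on whether $F$ meets the last fundamental circuit, the same base exchange $B' = B \cup \{e_n\} \setminus \{f\}$, and an appeal to Corollary~\ref{cor: fundamental change} to control the new fundamental circuits. The paper states the key containment as $C_M(e_i,B') \setminus C_M(e_i,B) \subseteq C_M(e_{n+1},B)$ together with $f \notin C_M(e_i,B')$, while you unpack the same fact via a case split on whether $f \in C_M(e_i,B)$; the content is identical.
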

\begin{proof}
We apply induction on $ n $. For $ n=0 $, we  have $ F=\emptyset  $  in which case $ i^{*}=0 $ and $ C=C_M(e_0, B) $ are 
as desired.  
Now we show the statement for $ n+1 $ assuming we know it for $ n $. We may assume that $ F\cap C_M(e_{n+1}, B) \neq 
\emptyset  $ since 
otherwise $ i^{*}=n+1 $ and $ C=C_M(e_{n+1}, B) $ are suitable. Let $ f\in F\cap C_M(e_{n+1}, B) $ and consider the 
base $ B':=B \cup \{ e_{n+1} \}\setminus \{ f \} $. Obviously, $ f\notin C_M(e_i, B') $ because $ f\notin B' $, moreover, Corollary \ref{cor: 
fundamental change} 
ensures that for every $ i\leq n $, $ C_M(e_i, 
B')\setminus C_M(e_i, B) \subseteq C_M(e_{n+1}, B) $. But then we are done by applying the 
induction hypotheses with $ B' $, $ 
e_0,\dots, e_{n}\in E \setminus B' $ and $ F\setminus \{ f \}\subseteq B' $.
\end{proof}

\subsection{A key lemma}\label{subsec: key lem}
\begin{lemma}\label{lem: key}
Assume that  $ B\in \mathcal{B}(M) $ and $ H\subseteq E\setminus B $ where  $ \kappa:= 
\left|H\right| $ is an 
uncountable 
regular 
cardinal. Suppose that there is  a $ B'\subseteq B $ with $ \left|B'\right|<\kappa $ 
such that $ C_M(e,B) \cap B'\neq \emptyset $ for each $ e\in H $.
Then there is an $ e^{*}\in H $ and a $ \kappa $-sized $ \triangle $-system $ \mathcal{D} $ of circuits with kernel $ K $ 
such that $ 
\bigcup \mathcal{D} \subseteq\bigcup_{e\in H}C_M(e,B) $ and  $ 
e^{*}\in K \subseteq  C_M(e^{*},B)\setminus B'$.
\end{lemma}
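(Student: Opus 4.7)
My plan is a two-phase construction. In the first phase I apply the $\triangle$-system lemma to the $\kappa$-sized family of finite sets $\{C_M(e,B):e\in H\}$ to get $H_1\subseteq H$ of size $\kappa$ on which this family is a $\triangle$-system with kernel $K_0\subseteq B$. The kernel must meet $B'$: otherwise, each $C_M(e,B)\cap B'$ would lie in the petal $C_M(e,B)\setminus K_0$, producing $\kappa$ pairwise disjoint nonempty subsets of $B'$, which contradicts $|B'|<\kappa$ and the regularity of $\kappa$. The same disjointness allows only $<\kappa$ petals to touch $B'$, so after a further restriction I obtain $H_2\subseteq H_1$ of size $\kappa$ on which $C_M(e,B)\cap B'=K':=K_0\cap B'$, a fixed nonempty finite set; write $m:=|K'|$.

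In the second phase I combine Lemma~\ref{lem: circuit eliminate F} with Fodor's lemma. Enumerate $H_2=\{h_\beta:\beta<\kappa\}$, and by transfinite recursion pick for each $\gamma<\kappa$ with $\gamma\geq m+1$ a set $T_\gamma\subseteq\{\beta:\beta<\gamma\}$ of size $m+1$, aiming for pairwise disjoint $T_\gamma$'s whenever possible. Applying Lemma~\ref{lem: circuit eliminate F} with base $B$, the elements $(h_\beta)_{\beta\in T_\gamma}$ (in some order as $e_0,\dots,e_m$), and $F=K'$ yields a circuit $C_\gamma\subseteq\bigcup_{\beta\in T_\gamma}C_M(h_\beta,B)\setminus K'$ containing a prescribed element $h_{\beta^\sharp(\gamma)}$ with $\beta^\sharp(\gamma)\in T_\gamma$; because each $C_M(h_\beta,B)$ meets $B'$ only in $K'$, the circuit $C_\gamma$ is disjoint from $B'$. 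The map $\gamma\mapsto\beta^\sharp(\gamma)$ is regressive, so by Fodor's lemma it is constant on a stationary $S\subseteq\kappa$ with value $\beta^*$; set $e^*:=h_{\beta^*}$ and observe that $e^*\in C_\gamma$ for every $\gamma\in S$.

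The third phase extracts the final $\triangle$-system: apply the $\triangle$-system lemma to the $\kappa$ finite sets $\{C_\gamma:\gamma\in S\}$, obtaining $\mathcal{D}$ of size $\kappa$ with kernel $K$. Automatically $e^*\in K$ (every $C_\gamma$ contains $e^*$) and $K\cap B'=\emptyset$ (every $C_\gamma$ is disjoint from $B'$). For the containment $K\subseteq C_M(e^*,B)$, write $C_M(h_\beta,B)=K_0\cup(C_M(h_\beta,B)\setminus K_0)$ with pairwise disjoint petals, and use $K_0\subseteq C_M(e^*,B)$ to deduce $C_\gamma\setminus C_M(e^*,B)\subseteq\bigcup_{\beta\in T_\gamma\setminus\{\beta^*\}}(C_M(h_\beta,B)\setminus K_0)$; an $x\in K\setminus C_M(e^*,B)$ therefore lies in a single petal of $h_{\beta(x)}$ with $\beta(x)\in T_\gamma\setminus\{\beta^*\}$ for every $\gamma$ indexing $\mathcal{D}$, which is contradicted once the traces $T_\gamma\setminus\{\beta^*\}$ are pairwise disjoint on $\mathcal{D}$'s index set.

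The main obstacle is exactly the bookkeeping needed to make those traces pairwise disjoint. Naive transfinite recursion cannot always provide fresh $T_\gamma$'s---at limit $\gamma$ the union $\bigcup_{\gamma'<\gamma}T_{\gamma'}$ can fill $\{\beta<\gamma\}$---so I expect to resolve this by a post-Fodor iterated dichotomy: either some $\beta\neq\beta^*$ is popular in $\kappa$-many $T_\gamma$'s (absorb it into a growing list of popular indices and restart) or $\kappa$-many of the remaining traces can be thinned to be pairwise disjoint. Since $|T_\gamma\setminus\{\beta^*\}|=m$, at most $m$ absorb steps can occur before the disjoint case triggers, and coordinating this iteration with the single stationary set produced by Fodor is where the argument will require the most technical care.
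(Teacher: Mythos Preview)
Your direct-construction strategy is genuinely different from the paper's proof by contradiction, and Phases~1 and~2 are set up correctly: the $\triangle$-system refinement to $H_2$ with $C_M(h,B)\cap B'=K'$ fixed is clean, and Lemma~\ref{lem: circuit eliminate F} does give circuits $C_\gamma$ disjoint from $B'$ and containing some $h_{\beta^\sharp(\gamma)}$. The problem is the step you yourself flag as delicate, and it is a real gap rather than bookkeeping.

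Your Phase~3 argument for $K\subseteq C_M(e^*,B)$ needs the traces $T_\gamma\setminus\{\beta^*\}$ to be pairwise disjoint on the index set of $\mathcal{D}$. But your iterated dichotomy only delivers $T_\gamma\setminus L$ pairwise disjoint for some $L\supseteq\{\beta^*\}$, and in general $|L|>1$ is forced. Indeed, iterating Fodor on the coordinates of $T_\gamma$ shows that on \emph{any} stationary $S$ there is a stationary $S'\subseteq S$ on which $T_\gamma$ is a fixed $(m{+}1)$-set $T^*$; every element of $T^*\setminus\{\beta^*\}$ is then popular in your sense, so your dichotomy absorbs all of them and terminates with $L=T^*$, $T_\gamma\equiv T^*$, and hence a single repeated circuit rather than a $\kappa$-sized $\triangle$-system. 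Even when $1<|L|<m{+}1$, your own computation only yields $K\setminus C_M(e^*,B)\subseteq\bigcup_{\beta\in L\setminus\{\beta^*\}}\bigl(C_M(h_\beta,B)\setminus K_0\bigr)$, and nothing prevents $K$ from containing an element of such a petal; then no choice of $e^{**}\in K\cap H$ satisfies $K\subseteq C_M(e^{**},B)$, since $e^*\notin C_M(h_{\beta_i},B)$ and petal elements of $h_{\beta_i}$ are not in $C_M(e^*,B)$.

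The paper sidesteps this entirely by arguing by contradiction. Assuming no $e^*$ works, one records for each $e_\alpha$ a small obstruction set $O_\alpha$ (indices whose petals any circuit through $e_\alpha$ must meet), shows $\bigcup_\alpha O_\alpha$ is non-stationary via Fodor, and then on the complementary stationary set uses Fodor once more to freeze $C_\alpha\setminus C_\alpha^{-}$ to a fixed finite $F$. A \emph{single} application of Lemma~\ref{lem: circuit eliminate F} to $|F|{+}1$ indices from that set then produces a circuit violating the obstruction for the index $i^*$ it selects. The point is that the contradiction absorbs the uncertainty in which $e_{i^*}$ Lemma~\ref{lem: circuit eliminate F} returns, whereas your direct approach must control that output across $\kappa$ applications simultaneously, and Fodor pulls in the wrong direction for that.
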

\begin{proof}
Let $ \{ e_\alpha:\ \alpha<\kappa \} $ be an enumeration of $ H $, $ C_\alpha:=C_M(e_\alpha, B) $ and let $ 
C_\alpha^{-}:=C_\alpha\setminus C_{<\alpha} $ where $ C_{<\alpha}:=\bigcup_{\beta<\alpha}C_\beta $. By 
deleting the irrelevant edges in $ E $, we may assume without loss of generality that $ 
E=\bigcup_{\alpha<\kappa}C_\alpha $, and thus  $ H=E\setminus B $. Note that $ E=\bigcup_{\alpha<\kappa}C_\alpha^{-} $ is a 
partition.  Since $ \left|B'\right|<\kappa $ by
assumption, and $ \kappa $ is regular,   $\alpha^{*}:=\sup\{ \alpha+1:\  B'\cap C_\alpha^{-}\neq \emptyset \}<\kappa $. Note 
that for each $ \alpha\in 
\kappa \setminus \alpha^{*} $, the circuit $ C_\alpha $ meets $ C_{<\alpha^{*}} $ because $  
C_\alpha \cap B'\neq \emptyset $. Suppose for a contradiction 
that the statement is false. Then in particular there is no  $ \alpha $ with $ \alpha^{*}\leq \alpha<\kappa $ that $ e_\alpha $ satisfies the 
requirements for $ e^{*} $. This implies that  there is no $ K$   with
$e_\alpha \in K \subseteq C_\alpha^{-}  $  such that $ K $ admits $ \kappa $ many pairwise disjoint extensions to a circuit. It follows that we can 
delete less than $ 
\kappa 
$ 
edges to destroy all possible extension. More precisely, for 
every $ \alpha\in \kappa \setminus \alpha^{*} $ we can pick a set $ O_\alpha \subseteq \kappa \setminus (\alpha+1) $ with $ 
\left|O_\alpha 
\right|<\kappa$ such that every circuit through $ e_\alpha $  meets 
 $ C_{<\alpha}\cup  \bigcup_{\beta \in O_\alpha}C_{\beta}^{-} $. Since $ O_\alpha $ is in particular 
non-stationary, Corollary \ref{cor: Fodor 1} ensures that so is $ O:=\bigcup_{\alpha^{*}\leq\alpha<\kappa}O_\alpha $.  Then 
$ S:=\kappa 
\setminus 
(\alpha^{*}\cup O) $ is stationary and $ C_\alpha \supsetneq C_\alpha^{-} $ for $ \alpha \in S $ by the definition of $ 
\alpha^{*} $ and by $ C_\alpha \cap B'\neq \emptyset $. 
\begin{claim}
There is a stationary set $ S'\subseteq S $ and a set $ F $ such that $ C_\alpha 
\setminus C_\alpha^{-}=F $ for every $ \alpha \in S' $
\end{claim}
\begin{proof}
The proof is a routine application of Fodor's lemma and the $ \kappa $-additivity of the non-stationary ideal. First of all, by Fact \ref{fact: union of 
non-stat} there is a 
stationary $ S_0\subseteq S $ and $ n\in \omega \setminus \{ 0 \} $ such that $ \left|C_\alpha 
\setminus C_\alpha^{-}\right|=n $ for every $ \alpha\in S_0 $. Let $ m\leq n $ be the largest number for which there is a set $ 
F $ of size $ m $ and a 
stationary set $ S_1\subseteq S_0 $ such that for every  $ \alpha \in S_1 $,  $F\subseteq  C_\alpha  \setminus C_\alpha^{-} $. 
Suppose for a contradiction that $ m<n $. The map $ S_1 \ni \alpha \mapsto \min \{ \beta<\kappa:\ 
(C_\alpha  \setminus (C_\alpha^{-}\cup F))\cap C_\beta^{-}\neq \emptyset  \}  $ is regressive; thus, by Fodor's lemma 
\ref{lem: Fodor's lemma} there is a stationary $ S_2\subseteq 
S_1 $ where it is constant, $ \beta_0 $ say. Then for every $ \alpha \in S_2 $, $ C_\alpha  \setminus (C_\alpha^{-}\cup F) $ 
meets $ C_{\beta_0}^{-} $. But then there is a stationary $ S_3\subseteq S_2 $ and $ e\in C_{\beta_0}^{-} $ such that for 
every $ \alpha \in S_3 $, $e\in C_\alpha  \setminus (C_\alpha^{-}\cup F) $. Since $ \left|F\cup \{ e \}\right|=m+1 $, this 
contradicts the maximality of $ m $. Thus $ S':=S_1 $ and $ F $  are as desired.
\end{proof}

Let $ S' $, $ F $ as in the claim above, $ n:=\left|F\right| $ and let  $ \alpha_0< \alpha_1< 
\dots <\alpha_n,  $ be the first $ n+1 $  ordinals in $ S' $. We pick $ i^{*}\leq n $  and  circuit $ C $ 
by  
applying Lemma \ref{lem: circuit eliminate F} with 
$ B $, $ e_0=e_{\alpha_0},\dots, e_n=e_{\alpha_n} $ and $ F $. Then  $e_{\alpha_{i^{*}}}\in  C $ and
 $C \cap (C_{<\alpha_{i^{*}}}^{-}\cup\bigcup_{\alpha \in O_{\alpha_{i^{*}}}}C_{\alpha}^{-})=\emptyset  $, which 
 contradicts the 
 choice of $ O_{\alpha_{i^{*}}} $.
\end{proof}

\subsection{A new auxiliary digraph}\label{subs: aux digraph}
The classical auxiliary digraph corresponding to a common independent set is insufficient for our purpose. Here we introduce a new 
one with more arcs. Recall that the vertex set of all digraphs we define is $ E $ and,  to simplify the notation, we identify the digraphs 
with their arc sets.
For $ B\in \mathcal{B}(M) $, let $ \boldsymbol{D_M(B)} $  be the digraph in which $ ef\in D_M(B) $ iff $ e\in 
E\setminus B_M $   and $ f\in C_M(e,B)\setminus \{ e \} $. For a digraph $ D $, let $ \boldsymbol{D^{-1}} $ that we obtain 
by reversing all the 
arcs of $ D $.   Finally, for $ (B_M,B_N)\in \mathcal{B}(M)\times \mathcal{B}(N) $,
we set $ \boldsymbol{D_{(M,N)}(B_M,B_N):}=D_M(B_M)\cup D_N^{-1}(B_N) $. We will omit the subscripts whenever 
the matroids are clear 
from 
the context, and we will write $ \boldsymbol{b} $ for $ (B_M,B_N) $. 

Note 
that for each  $ef\in  D(b)$ there are one or two circuits corresponding to $ ef $ depending on weather $ ef\in D(B_M)\triangle 
D^{-1}(B_N) $ or $ ef\in D(B_M)\cap D^{-1}(B_N) $. These are the \emph{escorting circuits} of $ ef $ w.r.t. $ (M,N) $ 
and $ b $.   Each arc  
$ef\in  D(B_M) $ indicates 
a 
base exchange 
in $ M $ w.r.t. $ B_M $, namely the removal of 
$ f $ and the addition of $ e $. Similarly,  if $ ef\in D^{-1}(B_N) $, then the indicated base exchange in $ N $ 
w.r.t. $ B_N $ is the addition of $ f $ and  the removal of $ e$. 

For a path $ P $ in $ D(b) $, we write $ \boldsymbol{A(P) }$ for the arc set and $ \boldsymbol{E(P)} $ for the vertex set of $ P $
(the latter is an edge set in the matroids). The escorting circuits of $ P $ are the escorting 
circuits of the arcs in $ A(P) $.  If $ P $ is a finite directed path 
without shortcuts  in $ D(b) $ (a shortcut  is an arc $ef\in D(b) $, where $ e,f \in E(P) $, $ f $ is later on $ P $ than $ f $ but $ ef\notin A(P) $), then 
executing simultaneously all the base exchanges corresponding to $ A(P) $ leads to a pair of bases. More precisely:
\begin{claim}\label{clm: base exchange path}
Let $ P $ be a finite directed path without shortcuts in $ D(b) $. Then the set  $ B_M' 
$  obtained from $ B_M $ by adding  the tails and deleting the heads of the arcs in $ A(P)\cap D(B_M) $ is a base of $ M $. Furthermore, $ B_M' 
$ can be obtained from $ B_M $ by a finite sequence of base exchanges where the fundamental circuits corresponding to the 
exchanges are exactly the escorting circuits of the arcs in $ A(P)\cap D(B_M) $. 

The set $ B_N' $ defined analogously (applying $ D(B_N) $ instead of $ D(B_M) $) and satisfies the analogous statement.
\end{claim}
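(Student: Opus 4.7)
The plan is to carry out the $M$-side exchanges indicated by the arcs of $A(P)\cap D(B_M)$ one after another, processing them in \emph{reverse} order along $P$. List these arcs as $a_1,\dots,a_m$ in the order they appear on $P$ and write $a_i=(t_i,h_i)$; by the definition of $D(B_M)$ we have $t_i\in E\setminus B_M$ and $h_i\in C_M(t_i,B_M)\setminus\{t_i\}\subseteq B_M$, and the vertices $t_1,h_1,\dots,t_m,h_m$ are pairwise distinct since $P$ is a path.

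The key structural observation I would establish first is: whenever $i<j$, we have $h_j\notin C_M(t_i,B_M)$. Indeed, if $h_j$ did lie in $C_M(t_i,B_M)\setminus\{t_i\}$, then $(t_i,h_j)\in D(B_M)\subseteq D(b)$; both endpoints sit in $E(P)$, $h_j$ is strictly later on $P$ than $t_i$, and $(t_i,h_j)$ is not an arc of $P$ because the $M$-arc of $P$ leaving $t_i$ is $(t_i,h_i)$ while $h_i\neq h_j$. This would be a shortcut in $P$, contradicting the hypothesis. Setting $B^{(m+1)}:=B_M$ and $B^{(i)}:=B^{(i+1)}\cup\{t_i\}\setminus\{h_i\}$ for $i=m,m-1,\dots,1$, I would then check by reverse induction that each $B^{(i)}$ is a base of $M$ obtained from $B^{(i+1)}$ by the base exchange along $C_M(t_i,B_M)$. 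Assuming $B^{(i+1)}$ is a base, the observation gives $C_M(t_i,B_M)\setminus\{t_i\}\subseteq B_M\setminus\{h_{i+1},\dots,h_m\}\subseteq B^{(i+1)}$; hence $C_M(t_i,B_M)$ is a circuit of $M$ inside $B^{(i+1)}\cup\{t_i\}$ through $t_i$ and must coincide with the fundamental circuit $C_M(t_i,B^{(i+1)})$. Since $h_i\in C_M(t_i,B_M)$, removing $h_i$ and adding $t_i$ is a legal exchange, so $B^{(i)}$ is a base. Unwinding, $B^{(1)}=B_M\cup\{t_1,\dots,t_m\}\setminus\{h_1,\dots,h_m\}=B_M'$, and the successive fundamental circuits are precisely the $M$-escorting circuits of the arcs in $A(P)\cap D(B_M)$.

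The statement for $B_N'$ follows from the symmetric argument applied to $A(P)\cap D^{-1}(B_N)$: reversing those arcs turns them into arcs of $D_N(B_N)$, the no-shortcut property of $P$ transfers without change, and the same reverse induction produces the desired sequence of exchanges in $N$. The only genuine difficulty is the shortcut-ruling observation; the rest is a routine finite induction, and no issue with infiniteness arises because $P$ is finite by assumption.
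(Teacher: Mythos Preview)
Your argument is correct and matches the paper's approach exactly: you process the arcs in $A(P)\cap D(B_M)$ in reverse path-order and use the no-shortcut hypothesis to show that each fundamental circuit $C_M(t_i,B_M)$ survives the earlier exchanges, which is precisely the sketch the paper gives. The only remark is that your phrase ``the same reverse induction'' for the $N$-side is slightly ambiguous; to make it match the $M$-argument verbatim you should apply it to the reversed path $P^{-1}$ (so that reverse-$P^{-1}$-order is forward $P$-order), which is exactly what the paper means when it says to use ``the path-order of $P$ instead of the reverse path-order'' for $N$.
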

\begin{proof}
The proof  is essentially the same as for the ``classical'' augmenting paths (see for example \cite[Lemma 3.1]{joo2021MIC}) so let 
us only give a proof sketch. Consider the 
arcs in $ 
A(P)\cap D(B_M) $ and order them according to the reverse of the path-order of $ P $. By executing the indicated base 
exchanges in $ M$ starting with $ B_M $
one by one in this order, the fundamental circuits corresponding to the later exchanges always remain the same (because there 
is no shortcut for $ P $ in $ D(b) $) and hence they can 
be executed. The argument for $ B_N $ is similar, except that we apply $ D^{-1}(B_N) $ instead of $ D(B_M) $ and the path-order of $ P $ 
instead of the  reverse path-order.
\end{proof}

We write $ \boldsymbol{b\circ P} $ for the new pair of bases 
defined in the claim 
above.  By having two disjoint $ b $-paths  without shortcuts, $ P $ and $ Q $ say, it 
may happen that executing simultaneously the corresponding  exchanges does not lead to a pair of bases. The following claim leads us to a useful 
sufficient 
condition to avoid this issue. Namely,  we will see that if $ Q 
$ does 
not 
meet any of the escorting circuits of $ P 
$, then simultaneous execution is possible. 
\begin{claim}\label{clm: no aux change}
Let $ P $ be a finite directed path without shortcuts in $ D(b) $, let $ b\circ P=:(B_M',B_N') $, and let $ U $ be the union of the escorting circuits 
of 
$ P $. Then the subgraphs of the auxiliary digraphs induced by $ E\setminus U $ are unchanged, i.e.
 $ D(B_M')[E\setminus U]=D(B_M)[E\setminus U] $ and $ D(B_N')[E\setminus U]=D(B_N)[E\setminus U] $.
\end{claim}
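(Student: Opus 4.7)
My plan is to prove both equalities by a straightforward induction along the sequence of single base exchanges provided by Claim \ref{clm: base exchange path}. The two statements are symmetric, so I would write out the $M$-side and indicate that the $N$-side follows verbatim after replacing $D(B_M)$ with $D^{-1}(B_N)$ and reversing arc directions.

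Write $B_M = B_0, B_1, \dots, B_k = B_M'$ for the sequence of bases, where at step $i$ one removes $f_i \in B_i$ and adds $e_i \notin B_i$, and where by Claim \ref{clm: base exchange path} the escorting circuit of the corresponding arc of $P$ equals the fundamental circuit at that step, i.e.\ $\hat C_i := C_M(e_i, B_M) = C_M(e_i, B_i)$. Since $\{e_i, f_i\} \subseteq \hat C_i \subseteq U$ for every $i$, an immediate induction gives $B_M \triangle B_i \subseteq U$, and in particular $B_M \triangle B_M' \subseteq U$. Thus membership in the base is preserved for every $e \in E \setminus U$, which settles one of the two conditions defining arcs of $D(B_M)[E \setminus U]$ versus $D(B_M')[E \setminus U]$.

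The second condition is that, for $e \in E \setminus U$ with $e \notin B_M$, the fundamental circuit $C_M(e, B_M)$ and $C_M(e, B_M')$ agree on $E \setminus U$. I would prove the stronger statement that $C_M(e, B_M) \triangle C_M(e, B_M') \subseteq U$, again by induction on $i$. If $f_i \notin C_M(e, B_i)$, then $C_M(e, B_i) \setminus \{e\} \subseteq B_i \setminus \{f_i\} \subseteq B_{i+1}$, so $C_M(e, B_i)$ is still a circuit of $M$ sitting inside $B_{i+1} \cup \{e\}$, forcing $C_M(e, B_{i+1}) = C_M(e, B_i)$. If $f_i \in C_M(e, B_i)$, then $f_i \in C_M(e, B_i) \cap \hat C_i$, and applying Corollary \ref{cor: fundamental change} with $e_0 := e_i$, $e_1 := e$, $f := f_i$, and $B := B_i$ yields $C_M(e, B_{i+1}) \triangle C_M(e, B_i) \subseteq \hat C_i \subseteq U$. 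Composing all the steps and invoking the triangle inequality for symmetric differences gives $C_M(e, B_M') \triangle C_M(e, B_M) \subseteq U$, as required.

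Combining these two facts, for any $e, f \in E \setminus U$ we have $e \in B_M \iff e \in B_M'$, and when $e \notin B_M$ also $f \in C_M(e, B_M) \iff f \in C_M(e, B_M')$; therefore $ef \in D(B_M) \iff ef \in D(B_M')$, which proves the first induced-subgraph equality. The same argument applied to $N$ (tracking the exchanges in $B_N$ and using Corollary \ref{cor: fundamental change} for $N$) delivers the second. The only delicate point I foresee is keeping the hypothesis $e \in E \setminus U$ intact when invoking Corollary \ref{cor: fundamental change} — i.e.\ noticing that $e$ is never involved in any exchange so it stays outside every $B_i$; this is immediate from $B_M \triangle B_i \subseteq U$, so no real obstacle arises.
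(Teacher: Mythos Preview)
Your proof is correct and follows essentially the same approach as the paper: both arguments run the finite sequence of base exchanges from Claim~\ref{clm: base exchange path} and use Corollary~\ref{cor: fundamental change} at each step to show that for any $e \in E \setminus U$ the fundamental circuit $C_M(e,\,\cdot\,)$ changes only inside $U$. Your write-up is somewhat more explicit (stating the induction and the symmetric-difference bound $C_M(e,B_{i+1}) \triangle C_M(e,B_i) \subseteq \hat C_i$ cleanly, and separately noting $B_M \triangle B_i \subseteq U$), while the paper leaves these as a one-sentence ``keeping track via Corollary~\ref{cor: fundamental change}''; but the underlying argument is identical.
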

\begin{proof}
Suppose that $ ef \in D(B_M)[E\setminus U] $. Then $ f\in C_M(e,B_M) $ by definition. By executing the base exchanges 
corresponding to 
the arcs in $ A(P)\cap D(B_M) $ in the reversed path-order and keeping track of the changes of the fundamental circuit of $ e $  via  Corollary 
\ref{cor: fundamental change}, we conclude that every edge in $ C_M(e,B_M)\setminus U $ remains in the fundamental circuit of $ e $ after each 
exchange. Thus in 
particular $ f \in C_M(e, B_M')  $ and hence $ ef\in D(B_M') $. On the other hand, if $ e\in E\setminus (B_M \cup U) $ and $ f\in B_M 
\setminus U $ with $ ef \notin D(B_M) $, then Corollary 
\ref{cor: fundamental change} ensures that all the new edges that the fundamental circuit of $ e $ gains are coming from $ U $, i.e.
 $ C_M(e,B_M')\setminus C_M(e,B_M) \subseteq 
U  $. Thus in particular $ f\notin C_M(e,B_M') $ and hence $ ef\notin D(B_M') $. 
This concludes the proof of $ D(B_M)[E\setminus U]=D(B_M')[E\setminus U] $. The proof of $ D(B_N)[E\setminus 
U]=D(B_N')[E\setminus U] $ is similar.
\end{proof}

Changing along transfinitely many paths under similar conditions is also possible:
\begin{lemma}\label{lem: base exchange path sequence}
Let $ \mathcal{P}=\left\langle P_\alpha:\ \alpha<\xi  \right\rangle $ be a transfinite sequence of paths without shortcuts in $ D(b) $  such that for 
every $ \beta<\alpha<\xi $, $ P_\alpha $ is disjoint from all the escorting circuits of $ P_\beta $. Let $ 
A(\mathcal{P}):=\bigcup_{\alpha<\xi}A(P_\alpha) $. Then $ \boldsymbol{b \circ \mathcal{P}}:=(B_M^{\xi},B_N^{\xi})\in 
\mathcal{B}(M)\times 
\mathcal{B}(N)  $, where  $ B_M^{\xi} 
$ and   $ B_N^{\xi}$ are obtained from $ B_M $ and $ B_N $ by adding  the tails and deleting the heads of the arcs in $ A(\mathcal{P})\cap 
D(B_M) $  and in $ A(\mathcal{P})\cap D(B_N) $ respectively.

Furthermore, for the union $ U_{<\xi} $ of the escorting circuits of the paths $ P_\alpha $ for $ \alpha<\xi $,  we have $ 
D(B_M^{\xi})[E\setminus 
U_{<\xi}]=D(B_M)[E\setminus U_{<\xi}] $ and $ D(B_N^{\xi})[E\setminus U_{<\xi}]=D(B_N)[E\setminus U_{<\xi}] 
$.
\end{lemma}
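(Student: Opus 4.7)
The natural approach is transfinite induction on $\xi$, with the intermediate pairs $b^\alpha := b \circ \langle P_\beta : \beta < \alpha \rangle$ for $\alpha \le \xi$ defined by the same add-tails/remove-heads formula as in the statement. I would maintain a strengthened induction hypothesis at each stage $\alpha$: (i) $b^\alpha \in \mathcal{B}(M) \times \mathcal{B}(N)$; (ii) the digraph-unchange $D(B_M^\alpha)[E \setminus U_{<\alpha}] = D(B_M)[E \setminus U_{<\alpha}]$ (and analogously for $N$); and (iii) the auxiliary circuit-control $C_M(e, B_M^\alpha) \subseteq C_M(e, B_M) \cup U_{<\alpha}$ for every $e \in E \setminus B_M$, obtained by iterating Corollary \ref{cor: fundamental change}.

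For the successor step $\alpha \to \alpha+1$, the key observation is that every arc of $P_\beta$ has both endpoints in its escorting circuit, so $E(P_\beta) \subseteq U_\beta$; hence the disjointness hypothesis yields $E(P_\alpha) \cap U_{<\alpha} = \emptyset$. Combined with (ii), this places the arcs of $P_\alpha$ in the untouched region of $D(b^\alpha)$, the partition between $D(B_M)$ and $D^{-1}(B_N)$ is preserved, and any shortcut in $D(b^\alpha)$ would be a shortcut in $D(b)$. Claim \ref{clm: base exchange path} applied to $P_\alpha$ and $b^\alpha$ then yields $b^{\alpha+1} \in \mathcal{B}(M) \times \mathcal{B}(N)$. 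Using (iii), the escorting circuits of $P_\alpha$ w.r.t.\ $b^\alpha$ sit inside $U_{<\alpha+1}$, so Claim \ref{clm: no aux change} upgrades (ii) to stage $\alpha+1$; (iii) propagates by one further application of Corollary \ref{cor: fundamental change}.

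The limit step is where the real work lies. First, vertex-disjointness of the paths (a direct consequence of the hypothesis together with $E(P_\beta) \subseteq U_\beta$) implies that each edge is added at most once and removed at most once along the sequence, so the membership $e \in B_M^\alpha$ stabilizes and $B_M^\xi$ is the pointwise limit of the $B_M^\alpha$. Independence of $B_M^\xi$ is then immediate from the finitariness of $M$: any finite $F \subseteq B_M^\xi$ sits inside $B_M^\beta$ for sufficiently large $\beta < \xi$. For spanning, the containment $B_M^\xi \triangle B_M \subseteq U_{<\xi}$ reduces the task to showing $U_{<\xi} \subseteq \mathsf{span}_M(B_M^\xi)$. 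For $e \in U_\alpha$, the base $B_M^{\alpha+1}$ $M$-spans $e$ via a finite circuit $C \subseteq B_M^{\alpha+1} \cup \{e\}$, and the plan is to push $C$ through the later exchanges stage by stage, using Corollary \ref{cor: fundamental change} to redirect it whenever a subsequent path $P_\gamma$ would remove one of its elements, terminating in a finite circuit contained in $B_M^\xi \cup \{e\}$. The hardest part will be controlling this pushing process: the disjointness hypothesis forces each later path to meet $C$ in a very restricted way, but a careful combinatorial accounting is needed to guarantee termination after finitely many redirections. Properties (ii) and (iii) at the limit then follow routinely from the same stabilization argument that handled independence.
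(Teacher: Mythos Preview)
Your transfinite induction with the strengthened hypothesis (i)--(iii) and your handling of the successor step match the paper's argument; the paper derives the containment $U^{\alpha}\subseteq U_{<\alpha+1}$ on the spot rather than carrying your (iii) as a standing hypothesis, but that is the same content. Independence at limits via finitariness is also what the paper does.

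The gap is in the spanning argument at limit $\xi$, where you are working harder than necessary and leave the decisive step open. You aim for $U_{<\xi}\subseteq\mathsf{span}_M(B_M^{\xi})$ and then propose to push an arbitrary witnessing circuit forward through the transfinite sequence, flagging termination as the unresolved ``hardest part''. But since $B_M$ is already a base, it suffices to span $B_M\setminus B_M^{\xi}$. For such an $e$ there is a unique $\alpha$ with $e\in B_M^{\alpha}\setminus B_M^{\alpha+1}$, and the paper's key observation is that $C_M(e,B_M^{\alpha+1})$ lies entirely inside the escorting circuits of $P_\alpha$ with respect to $b_\alpha$: by Claim~\ref{clm: base exchange path} the passage from $B_M^{\alpha}$ to $B_M^{\alpha+1}$ is a finite sequence of exchanges whose fundamental circuits are exactly those escorting circuits, so tracking $e$ through them via Corollary~\ref{cor: fundamental change} keeps its fundamental circuit inside their union. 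By your own (iii) this union sits in $U_{<\alpha+1}$, which every later $P_\beta$ avoids; hence $C_M(e,B_M^{\alpha+1})$ is never touched again and already witnesses $e\in\mathsf{span}_M(B_M^{\xi})$. No pushing is required. (For the record, your pushing scheme can be made to terminate: each redirection at stage $\gamma$ must remove an element outside $U_{<\gamma}$, hence by (iii) inside the fixed finite set $C_M(e,B_M)$, and path-disjointness then bounds the number of redirections by $|C_M(e,B_M)|$. But this detour is unnecessary once one restricts attention to $e\in B_M\setminus B_M^{\xi}$.)
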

\begin{proof}
For $ \alpha<\xi $, let $ \boldsymbol{U_\alpha} $ be the union of the escorting circuits of $ P_\alpha $ (w.r.t. $ b $) and let $ 
\boldsymbol{U_{<\alpha}}:=\bigcup_{\beta<\alpha}U_\beta $. We may assume 
by induction that for every $ \alpha<\xi $, we already know the 
statement for the initial segment $ 
\boldsymbol{\mathcal{P}_{<\alpha}}:=\left\langle P_\beta:\ \beta<\alpha  \right\rangle $. This means that $ \boldsymbol{b_\alpha}:=b\circ 
\mathcal{P}_{<\alpha}=: \boldsymbol{(B_M^{\alpha}, B_N^{\alpha})}\in \mathcal{B}(M)\times \mathcal{B}(N)  $, furthermore, $ 
D(B_M^{\alpha})[E\setminus 
U_{<\alpha}]=D(B_M)[E\setminus U_{<\alpha}] $ and $ D(B_N^{\alpha})[E\setminus U_{<\alpha}]=D(B_N)[E\setminus 
U_{<\alpha}] $. 

If $ \xi=0 $, then there is nothing to prove. If $ \xi $ is a successor ordinal, $ \xi=\alpha+1 $ say, then $ P_\alpha $ is a path in  $ D(b_\alpha) $ 
without shortcuts because it is such a path in $ D(b) $  and $ E(P_\alpha) \subseteq E\setminus U_{<\alpha} $ ensures via the 
induction hypothesis that this remains true in $ D(b_\alpha) $. Hence Claim \ref{clm: base exchange 
path} and the induction hypothesis ensure that $b_{\alpha+1}= b_\alpha \circ P_\alpha \in \mathcal{B}(M)\times \mathcal{B}(N) $. In the 
induction step we also need to show that  the ``furthermore'' part of the lemma is maintained.
Let $ \boldsymbol{(B_M^{\alpha+1}, B_N^{\alpha+1})}:=b_{\alpha+1} $ and let $ U^{\alpha} $  be the union of the escorting circuits of $ 
P_\alpha $ w.r.t. $ 
b_\alpha $.  Claim 
\ref{clm: no aux change}  guarantees that  $ 
D(B_M^{\alpha+1})[E\setminus U^{\alpha}]=D(B_M^{\alpha})[E\setminus U^{\alpha}] $ and $ 
D(B_N^{\alpha+1})[E\setminus U^{\alpha}]=D(B_N^{\alpha})[E\setminus U^{\alpha}] $. We know by induction that $ 
D(B_M^{\alpha})[E\setminus 
U_{<\alpha}]=D(B_M)[E\setminus U_{<\alpha}] $ and $ D(B_N^{\alpha})[E\setminus U_{<\alpha}]=D(B_N)[E\setminus 
U_{<\alpha}] $. Thus we conclude that $ 
D(B_M^{\alpha})[E\setminus 
(U_{<\alpha}\cup U^{\alpha})]=D(B_M)[E\setminus (U_{<\alpha}\cup U^{\alpha})] $ and $ D(B_N^{\alpha})[E\setminus 
(U_{<\alpha}\cup U^{\alpha})]=D(B_N)[E\setminus 
(U_{<\alpha}\cup U^{\alpha})] $. Since $ B_M^{\alpha} $ is obtained from $ B_M $ by a sequence of base exchanges,  Corollary \ref{cor: 
fundamental change}  guarantees that $ C_M(e, B_M^{\alpha}) \setminus 
C_M(e, B_M) \subseteq U_{<\alpha} $ for every $ e \in 
E(P_\alpha)\setminus B_M $. Similarly, $ C_N(e, B_N^{\alpha}) \setminus C_N(e, B_N) \subseteq U_{<\alpha} $ for every $ e\in 
E(P_\alpha)\setminus B_N $. Thus  $ 
U^{\alpha}\subseteq 
U_{<\alpha+1} $ and therefore $ 
D(B_M^{\alpha})[E\setminus 
U_{<\alpha+1}]=D(B_M)[E\setminus U_{<\alpha+1}] $ and $ D(B_N^{\alpha})[E\setminus 
U_{<\alpha+1}]=D(B_N)[E\setminus 
U_{<\alpha+1}] $.

Assume now that $ \xi $ is a limit ordinal. Then $ (B_M^{\xi},B_N^{\xi})\in \mathcal{I}(M)\times \mathcal{I}(N) $ because a circuit cannot 
appear at a limit step first. We need to show that the sets  $ B_M^{\xi} $ and $ B_N^{\xi} $ are  spanning in the corresponding matroids. Let 
$ e \in B_M\setminus B_M^{\xi} $ be given. Then there is a unique $ \alpha<\xi $ with $e\in  B_M^{\alpha}\setminus B_M^{\alpha+1} $. We 
claim that $ C_M(e,B_{\alpha+1}) $ will not change any more, i.e. for every $\beta > \alpha+1 $ we have $C_M(e,B_{\beta})= 
C_M(e,B_{\alpha+1}) $. Indeed,  for $ \beta > \alpha+1 $ the path $ P_\beta $ does not meet $ 
U_{<\alpha+1} $ by assumption and $C_M(e,B_{\alpha+1})\subseteq U^{\alpha}\subseteq U_{<\alpha+1} $. But then  $ C_M(e,B_{\alpha+1}) 
$  witnesses $ e\in 
\mathsf{span}_M(B_M^{\xi}) $. We conclude that $ 
B_M^{\xi} $ spans $ B_M $ in $ M $ and thus $ B_M^{\xi}  $  spans $ M $. The proof that $ B_N^{\xi} $ spans $ N $ is 
similar. We need to 
check again that the ``furthermore'' part of the lemma is maintained in this induction step as well. Since we already know that $ B_M^{\xi} $ and $ 
B_N^{\xi} $ span the corresponding matroids, we conclude that there cannot be any edge $ e $ where the generator $ G_M(e, B_M^{\alpha}) $ or 
$ G_N(e, 
B_N^{\alpha}) $ changes infinitely often as $ \alpha $ increases. But then for every finite  $ F\subseteq E\setminus U_{<\xi} $, there is an $ 
\alpha<\xi $ such that $ D(b_\beta)[F]=D(b_\xi)[F] $ whenever $ \alpha \leq \beta<\xi $. Therefore the ``furthermore'' part follows directly from 
the induction hypothesis.
\end{proof}

\section{Proof of Theorem \ref{thm: main}}\label{sec: main proof}

\subsection{Preprocessing}We can assume without loss of generality that $ I $ is maximal in $ \mathcal{I}(M)\cap 
\mathcal{I}(N) $ because extending 
it to a maximal element preserves the condition $ r(M/I)<r(N/I) $.
 Suppose first that $ r(N/I)\leq \aleph_0 $. Then  $ r(M/I) < \aleph_0 $ by assumption.
We apply Lemma \ref{lem: no aug hindrance}  iteratively starting with $ I $ until the second possibility occurs 
(this happens after at most $ r(M/I) $ iterations). This provides a hindrance because the iterations maintain $ r(N/I)>r(M/I) $.

Assume now that $ r(N/I) \geq \aleph_1 $. We claim that we may assume without loss of generality that $ \kappa:=r(N/I) $ is 
regular. Suppose that it is not.  Pick a regular cardinal $ \lambda $ with $ \kappa>\lambda > r(M/I) $, a base $ B $ of $ N/I $, 
and a $ \lambda $-sized  $ B' \subseteq B $.  Set $ N':=N/(B\setminus B') $ and $M':= M\setminus  (B\setminus B')$. 
Then $ r(N'/I)=\lambda >r(M/I) \geq r(M'/I) 
$ and any $ 
(M',N') 
$-hindrance is also an $ (M,N) $-hindrance by Observation \ref{obs:  hindrance minor}.    

\subsection{The popularity matroid} Let $ \boldsymbol{J_M} \in \mathcal{B}(M/I) $ and $ \boldsymbol{J_N} \in 
\mathcal{B}(N/I) $, 
furthermore,  
we denote 
$ I\cup J_M $ and $ I\cup J_N $ by $ \boldsymbol{B_M} $ and $ \boldsymbol{B_N} $ respectively. Note that $ 
J_M \cap J_N =\emptyset $ because $ I $ is assumed to be maximal in $ \mathcal{I}(M)\cap \mathcal{I}(N) $. We set $ 
\boldsymbol{b}:=(B_M,B_N) $.  We will make use of the following definitions. A 
$ b $\emph{-path} is a finite directed 
path in $ D(b) $ with $ \mathsf{in}(P)\in B_N\setminus B_M $. A $ b $\emph{-path-system} is a set $ \mathcal{P} $ of $ \kappa $ many pairwise 
disjoint $ b 
$-paths.

Informally, a finite set $ K\subseteq E $ is popular if we have a lot of opportunities to extend it to an $ M $-circuit using the edges in $ I $ and 
edges that we can add by a $ b $-path-system. The precise definition reads as 
follows:
\begin{definition}[popular sets]
 A nonempty set  $ K\subseteq E $ is  \emph{popular} (w.r.t. $ b $ and $ (M,N) $) if there is a 
 $ b $-path-system $ \mathcal{P} 
 $  such that there is a $ 
 \kappa $-sized $ 
\triangle $-system $ \mathcal{D} $ of $ M $-circuits with kernel $ K $  for which $ 
I\cup \mathsf{ter}(\mathcal{P}) $ includes all 
the petals of $ \mathcal{D} $. 
\end{definition}
Let $ \boldsymbol{\mathcal{O}} $ be the set of popular sets.
\begin{observation}\label{obs: single P witness}
For  every $ \mathcal{O}'\subseteq \mathcal{O} $ with  $ \left|\mathcal{O}'\right|\leq \kappa $, there is a single $ b 
$-path-system $ \mathcal{P} 
$ that is witnessing simultaneously the popularity of every $ O\in \mathcal{O}' $. 
\end{observation}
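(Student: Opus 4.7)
For each $O\in \mathcal{O}'$ I fix a witness $(\mathcal{P}_O,\mathcal{D}_O)$, where $\mathcal{P}_O$ is a $b$-path-system and $\mathcal{D}_O$ is a $\kappa$-sized $\triangle$-system of $M$-circuits with kernel $O$ whose petals lie in $I\cup \mathsf{ter}(\mathcal{P}_O)$. The plan is to construct $\mathcal{P}$ by a transfinite recursion of length $\kappa$, simultaneously carving out, for every $O\in \mathcal{O}'$, a size-$\kappa$ subfamily $\mathcal{D}_O'\subseteq \mathcal{D}_O$ (automatically still a $\triangle$-system with kernel $O$) whose petals are covered by $I$ together with the terminals of paths thrown into $\mathcal{P}$.

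I would fix an enumeration $\{(O_\gamma,\beta_\gamma):\gamma<\kappa\}$ of $\mathcal{O}'\times \kappa$. At stage $\gamma$, let $F_\gamma$ denote the union of the vertex sets of all paths added so far and the edges of all petals chosen so far; since each earlier stage contributes only finitely many edges and $\kappa$ is regular uncountable, $|F_\gamma|<\kappa$. I would then pick $C_\gamma\in \mathcal{D}_{O_\gamma}$, not previously selected for $O_\gamma$, such that (a) the petal of $C_\gamma$ is disjoint from $F_\gamma$ and (b) every path in $\mathcal{P}_{O_\gamma}$ ending at an edge of $C_\gamma\setminus O_\gamma\setminus I$ is disjoint from $F_\gamma$; then add $C_\gamma$ to $\mathcal{D}_{O_\gamma}'$ and these finitely many paths to the growing $\mathcal{P}$. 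If $C_\gamma\setminus O_\gamma\subseteq I$, so that no new path is contributed, I include a dummy path from a fixed $\mathcal{P}_{O_0}$ disjoint from $F_\gamma$ to ensure $|\mathcal{P}|=\kappa$ at the end.

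The hard part will be arguing that such a choice is always available, i.e.\ that fewer than $\kappa$ circuits of $\mathcal{D}_{O_\gamma}$ are blocked at any stage. This should follow from combining the $\triangle$-system property of $\mathcal{D}_{O_\gamma}$ with the pairwise disjointness of the paths in $\mathcal{P}_{O_\gamma}$: each edge of $F_\gamma$ lies in at most one petal of $\mathcal{D}_{O_\gamma}$ (blocking at most one circuit via (a)) and in at most one path of $\mathcal{P}_{O_\gamma}$, whose terminal in turn lies in at most one petal (blocking at most one circuit via (b)). Hence at most $2|F_\gamma|<\kappa$ circuits are blocked, leaving a $\kappa$-sized reservoir at every stage; the same counting secures a dummy path when needed. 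Taking $\mathcal{P}$ to be the union of all paths added over the recursion then delivers the required simultaneous witness.
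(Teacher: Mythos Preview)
Your argument is correct and is precisely the ``diagonal'' transfinite recursion the paper alludes to; you have simply spelled out the bookkeeping (the set $F_\gamma$, the counting of blocked circuits via the disjointness of petals and of paths, and the dummy-path device to guarantee $|\mathcal{P}|=\kappa$) that the paper leaves implicit. One minor remark: the ``not previously selected'' constraint is already absorbed by condition~(a), since you put chosen petals into $F_\gamma$ and petals in a $\triangle$-system of $M$-circuits are necessarily nonempty.
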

\begin{proof}
By taking a $ b $-path-system $ \mathcal{P}_O $ witnessing the popularity of $ O\in \mathcal{O}' $, one can build ``diagonally'' from them by a 
straightforward transfinite recursion a  $ b 
$-path-system $ \mathcal{P} $ that still provides $ \kappa $ many petals for every $ \triangle $-system corresponding to the sets in $ \mathcal{O}' 
$.
\end{proof}

\begin{lemma}\label{lem: popularity matroid}
$\boldsymbol{\mathcal{F}}:= \mathcal{C}(M)\cup \mathcal{O} $ satisfies the strong circuit elimination axiom.
\end{lemma}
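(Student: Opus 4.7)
The plan is a case analysis on whether $F_0,F_1\in\mathcal{F}$ are $M$-circuits or popular. The case when both lie in $\mathcal{C}(M)$ is immediate from strong circuit elimination in $M$, so I focus on the remaining cases and handle them uniformly. For each $F_i$ that is popular I fix a witnessing $\triangle$-system $\mathcal{D}_i=\{C_j^i:j<\kappa\}$ of $M$-circuits with kernel $F_i$ and petals $P_j^i:=C_j^i\setminus F_i$, while for each $F_i\in\mathcal{C}(M)$ I formally take $C_j^i:=F_i$ and $P_j^i:=\emptyset$. By Observation \ref{obs: single P witness} a single $b$-path-system $\mathcal{P}$ can be arranged to witness both families at once, with all petals contained in $I\cup\mathsf{ter}(\mathcal{P})$.

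Assume without loss of generality that $f\in F_0\setminus F_1$ (the other case is symmetric). Pairwise disjointness of $\{P_j^1\}_j$ forces $f\in C_j^1$ to fail for all but at most one $j$, so for $\kappa$ many $j$ we have $e\in F_0\cap F_1\subseteq C_j^0\cap C_j^1$ and $f\in C_j^0\triangle C_j^1$. Strong circuit elimination in $M$ then yields $M$-circuits $C_j'\ni f$ with $C_j'\subseteq (F_0\cup F_1\cup P_j^0\cup P_j^1)\setminus\{e\}$. To this $\kappa$-sized family of finite sets I would apply Shanin's $\Delta$-system lemma to extract a $\kappa$-sized $\Delta$-subsystem $\{C_j':j\in J\}$ with kernel $K^*$; note that $f\in K^*$ and $e\notin K^*$ automatically.

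The key combinatorial observation is that $K^*\subseteq (F_0\cup F_1)\setminus\{e\}$: any $x\in K^*\setminus(F_0\cup F_1)$ would belong to $P_j^0\cup P_j^1$ for every $j\in J$, but $x$ can lie in at most one $P_j^0$ and at most one $P_j^1$, contradicting $|J|=\kappa\geq\aleph_1$. If all $C_j'$ coincide, their common value is $K^*\in\mathcal{C}(M)\subseteq\mathcal{F}$; otherwise a further pigeonhole on the finite set $F_0\cup F_1$ gives $J'\subseteq J$ of size $\kappa$ with $C_j'\cap(F_0\cup F_1)=K^*$ for every $j\in J'$, and then the petals $C_j'\setminus K^*\subseteq P_j^0\cup P_j^1\subseteq I\cup\mathsf{ter}(\mathcal{P})$ are pairwise disjoint, witnessing that $K^*\in\mathcal{O}\subseteq\mathcal{F}$. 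I expect the main obstacle to be exactly this last step of bookkeeping---simultaneously forcing the new kernel to sit inside $F_0\cup F_1$ (via the ``at most two'' count) and the new petals to sit inside $I\cup\mathsf{ter}(\mathcal{P})$ (via the additional pigeonhole refinement); everything else is a mechanical combination of strong circuit elimination in $M$ with the $\Delta$-system lemma.
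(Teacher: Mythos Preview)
Your argument is correct and follows the same overall strategy as the paper: apply strong circuit elimination in $M$ to circuits drawn from the witnessing $\triangle$-systems and then locate the new kernel inside $(F_0\cup F_1)\setminus\{e\}$ by pigeonhole on that finite set. The one genuine difference is in how you force the resulting circuits to form a $\triangle$-system. The paper first \emph{trims} so that the petals of $\mathcal{D}_0$ are disjoint from those of $\mathcal{D}_1$ (and, in the mixed case, from $F_0$); this guarantees a priori that distinct $C_\alpha$ can only meet inside $F_0\cup F_1$, so a single pigeonhole on $F_0\cup F_1$ already yields the $\triangle$-system. You instead skip the trimming and invoke Shanin's $\Delta$-system lemma on the raw family $\{C_j'\}$, afterwards arguing that the kernel must land in $F_0\cup F_1$ because any edge outside can sit in at most two petals. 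Both routes work; yours is a bit more uniform across the three cases at the cost of importing the $\Delta$-system lemma, while the paper's trimming keeps the argument entirely elementary. One small point worth tightening: your dichotomy ``all $C_j'$ coincide / otherwise'' should really be ``$\kappa$ many $C_j'$ equal a single circuit / $\kappa$ many are distinct'', so that in the second branch the extracted $\triangle$-system genuinely has $\kappa$ distinct members.
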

\begin{proof}
Let  $ F_0, F_1\in \mathcal{F} $,  $ e\in F_0 \cap F_1 $, and $ f\in F_0 \triangle F_1 $ be given. If $ F_0, F_1\in 
\mathcal{C}(M) $, then we apply 
strong circuit elimination in $ M $ to get an $ M $-circuit $ C \subseteq (F_0 \cup F_1)\setminus \{ e \} $ through $ f $. Assume now that $ F_0 
\in \mathcal{C}(M) $ and $ F_1\in \mathcal{O} $. Then by the definition of popular sets, there is a $ b $-path-system $ 
\mathcal{P} $ 
and a  $ 
\triangle$-system $ \mathcal{D} $ of $ M $-circuits with kernel $ F_1 $ where all petals of $ \mathcal{D} $ are included in $ 
I \cup \mathsf{ter}(\mathcal{P}) $. 
By trimming $ 
\mathcal{D} $ 
we may assume that no petal meets $ F_0 $.  For 
each 
$ C\in \mathcal{D} $, let 
$ C' $ be a circuit that we obtain by applying strong circuit elimination in $ M $ with $ F_0 $,  $C $, $ e $, and $ f $.  Then 
there is a   $ G \subseteq (F_0 \cup F_1)\setminus \{ e \} $ through $ f $ for which  $ \kappa $ many $ C' $ meets $ (F_0 \cup 
F_1)\setminus \{ 
e \} $ in $ G $. But 
then these 
circuits $ C' $ form a $ \triangle $-system with kernel $ G $. Therefore $ \mathcal{P} $ witnesses that $ G $ is a popular set, thus 
$ G\in \mathcal{F} $. 

Finally, assume that  $ F_0  $ and $ F_1 $ are both popular sets.  By Observation \ref{obs: single P witness}, 
there is a single $ b $-path-system $ \mathcal{P} $ that witnesses the popularity of both. Let $ \mathcal{D}_0=\{ 
C_{0,\alpha}:\ 
\alpha<\kappa \} $ and $ 
\mathcal{D}_1=\{ C_{1,\alpha}:\ \alpha<\kappa \}  $ be  $ \triangle $-systems of $ M $-circuits with all petals included in $ 
I \cup \mathsf{ter}(P) $ with respective 
kernels $ F_0 $ and $ F_1 $. Clearly, we can assume by trimming them that every petal of $ \mathcal{D}_0 $ is disjoint to every petal 
of $ \mathcal{D}_1 $.  We obtain $ C_\alpha $ by applying strong circuit elimination 
in $ M $ with $ C_{0,\alpha} $, $ C_{1,\alpha} $, $ e $, and $ f $.  There is a $ G \subseteq (C_0\cup C_1)\setminus \{ e \} 
$ through $ f $ for which $ 
\kappa $ many $ C_\alpha $ meets $ 
(F_0\cup F_1)\setminus \{ e \} $ in $ G $. Note that the circuits $ C_\alpha $ cannot meet out of $ (F_0\cup F_1)\setminus \{ e 
\} $ 
because the petals of $ \mathcal{D}_0 $ are disjoint from
the petals of $ \mathcal{D}_1 $. But then these $ \kappa $ many $ M $-circuits form a $ \triangle $-system with kernel $ G $ 
with all petals included in $ I\cup 
\mathsf{ter}(\mathcal{P}) $. Thus $ G $ is popular. 
\end{proof}
\begin{corollary}\label{cor: pop sets are scrawls}
The minimal elements of $ \mathcal{F} $ are the circuits of a matroid $ \boldsymbol{L} $ on $ E $ in which every popular 
set is a 
scrawl (i.e. union of $ L $-circuits).
\end{corollary}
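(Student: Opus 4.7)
The plan is to invoke Lemma \ref{lem: scrawl} on the family $\mathcal{F} = \mathcal{C}(M) \cup \mathcal{O}$, which would immediately deliver both halves of the statement at once: the minimal elements of $\mathcal{F}$ form the circuit set of a finitary matroid $L$ on $E$, and every element of $\mathcal{F}$ is a scrawl of $L$. Since $\mathcal{O} \subseteq \mathcal{F}$, the scrawl conclusion specializes to every popular set.

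To apply Lemma \ref{lem: scrawl}, I need to check its two hypotheses. The first is that every member of $\mathcal{F}$ is a finite nonempty subset of $E$. For $C \in \mathcal{C}(M)$ this is immediate: finiteness follows from the finitarity of $M$, and nonemptyness from the definition of a circuit. For a popular set $K$, nonemptyness is built into the definition, and finiteness follows because $K$ is the kernel of a $\triangle$-system of $M$-circuits and is therefore contained in each such circuit, which is itself finite.

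The second hypothesis is that $\mathcal{F}$ satisfies the strong circuit elimination axiom, and this is exactly the content of the just-proved Lemma \ref{lem: popularity matroid}. With both hypotheses verified, Lemma \ref{lem: scrawl} produces the desired matroid $L$ whose circuits are the minimal elements of $\mathcal{F}$, and it further guarantees that each $F \in \mathcal{F}$, in particular each popular set, is a union of $L$-circuits.

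There is essentially no obstacle here; the corollary is a bookkeeping step that packages Lemmas \ref{lem: popularity matroid} and \ref{lem: scrawl} into the matroid $L$ that will be deployed in the next section. The only subtlety worth pointing out in writing is the justification that popular sets really are finite, which rests on reading the kernel of a $\triangle$-system of $M$-circuits as a subset of an individual $M$-circuit.
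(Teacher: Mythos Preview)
Your proof is correct and follows exactly the paper's approach, which is the one-line observation that the corollary follows directly from Lemma~\ref{lem: popularity matroid} via Lemma~\ref{lem: scrawl}. You have simply made explicit the verification of the hypotheses of Lemma~\ref{lem: scrawl} (finiteness and nonemptyness of the members of $\mathcal{F}$), which the paper leaves implicit; your remark that popular sets are finite because they sit inside individual $M$-circuits is the right justification.
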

\begin{proof}
It follows directly from Lemma \ref{lem: popularity matroid} via Lemma \ref{lem: scrawl}.
\end{proof}
We call $ L $ the \emph{popularity matroid} (w.r.t. $ b $ and  $ (M,N) $). Note that $ \mathcal{I}(L)\subseteq 
\mathcal{I}(M) $ because every $ 
M 
$-circuit 
includes an $ L $-circuit.
\subsection{Bad edges}

 Let $ \boldsymbol{A_0}:=J_M\cup \mathsf{out}_{D(B_N)}(J_M) $. Suppose that the set  $ A_{n} $ is already 
 defined. Then 
  $ \boldsymbol{A_{n+1}} $ consists of: the 
 edges in $ A_n $,  those edges  $ e\in 
E\setminus B_M $ for which $ \mathsf{out}_{D(B_M)}(e)\setminus A_n $ does not $ L $-span $ e $, and  the out-neighbours of all of these edges 
in $ 
D(B_N) $. Informally, the idea behind this recursion is the following: If the $ M $-spanning of an $ e\in E\setminus 
B_M $ by $ B_M $ relies 
on 
edges that are already ``compromised'' (i.e. are in $ A_n $), then we are not confident that we will be able to $ M $-span $ e $ except if the 
uncompromised part of its generator on $ B_M $ still spans $ e $ in $ L $. We intend to get rid of the edges in $ A_n $  
(deletion in $ M 
$ and 
contraction in 
$ N $) but 
the 
price for this is compromising further edges. 

We 
set $ \boldsymbol{A}:=\bigcup_{n<\omega}A_n $ and call it the set of the \emph{bad} edges.  
\begin{observation}\label{obs: good are spanned}
For every $e\in E\setminus (B_M\cup A) 
$, the set $\mathsf{out}_M(e)\setminus A= C_M(e,B_M)\setminus (A\cup \{ e \}) $ spans $ e $ in $ L $.
\end{observation}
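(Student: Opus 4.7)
The observation is essentially a direct unpacking of how the sets $A_n$ were defined, with one small twist that exploits the finitary hypothesis on $M$. The key point is that $C_M(e,B_M)$ is finite (since $M$ is finitary), while $A=\bigcup_{n<\omega}A_n$ is an increasing union, so the intersection of $C_M(e,B_M)$ with $A$ stabilizes at a finite stage.

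More precisely, the first step is to fix $e\in E\setminus(B_M\cup A)$ and note that $C_M(e,B_M)$ is a finite set. Because the sequence $\langle A_n\rangle_{n<\omega}$ is $\subseteq$-increasing with union $A$, there exists some $n<\omega$ such that
\[
C_M(e,B_M)\cap A \;=\; C_M(e,B_M)\cap A_n,
\]
and hence $\mathsf{out}_{D(B_M)}(e)\setminus A=\mathsf{out}_{D(B_M)}(e)\setminus A_n$.

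The second step is to invoke the definition of $A_{n+1}$: since $e\in E\setminus B_M$ and $e\notin A\supseteq A_{n+1}$, the recursive clause that would have placed $e$ into $A_{n+1}$ must fail. That clause fails precisely when $\mathsf{out}_{D(B_M)}(e)\setminus A_n$ does $L$-span $e$. Combined with the equality from the previous step, this yields that $\mathsf{out}_{D(B_M)}(e)\setminus A$ $L$-spans $e$, which is exactly the claim (the rewriting $\mathsf{out}_{D(B_M)}(e)=C_M(e,B_M)\setminus\{e\}$ is immediate from the definition of $D_M(B_M)$).

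There is no real obstacle here; the only non-trivial ingredient is the stabilization of $C_M(e,B_M)\cap A_n$, which relies on $M$ being finitary so that fundamental circuits are finite. Without finitariness the argument would fail, since an edge could be ``progressively compromised'' through infinitely many stages along its (possibly infinite) fundamental circuit; the finitary hypothesis is precisely what allows the recursive definition of $A$ to do its job at a finite stage for each individual $e$.
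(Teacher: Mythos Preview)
Your proof is correct and takes essentially the same approach as the paper's own proof; both hinge on the fact that $e\notin A_{n+1}$ forces $\mathsf{out}_{D(B_M)}(e)\setminus A_n$ to $L$-span $e$, combined with the finitary stabilization $C_M(e,B_M)\cap A_n = C_M(e,B_M)\cap A$ for large $n$. The paper phrases it contrapositively (if not $L$-spanned over $A$, pick the smallest $n$ where it fails over $A_n$, hence $e\in A_{n+1}$), while you argue directly and are more explicit about why finitariness is needed for the stabilization---but the content is the same.
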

\begin{proof}
If $ C_M(e,B_M)\setminus (A\cup \{ e \}) $ does not span $ e $ in $ L $, then there is a smallest $ n $ such that $ C_M(e,B_M)\setminus 
(A_n\cup 
\{ e \}) $ (which is exactly $ \mathsf{out}_{D(B_M)}(e)\setminus A_n $) does not span $ e $ in $ L $ in which case $ e \in A_{n+1} $ by 
definition.
\end{proof}
\begin{observation}\label{obs: bad are N-spanned}
$ A\cap B_N $ spans $ A $ in $ N $.
\end{observation}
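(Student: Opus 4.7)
The plan is to prove by induction on $n<\omega$ that $A_n\cap B_N$ spans $A_n$ in $N$, and then deduce the claim about $A=\bigcup_{n<\omega}A_n$ by noting that any $e\in A$ belongs to some $A_n$, hence is $N$-spanned by $A_n\cap B_N\subseteq A\cap B_N$.

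For the base case, recall $J_M\cap B_N=\emptyset$ (since $J_M$ is disjoint from $I$ and $J_N$ by maximality of $I$). Thus every $e\in J_M$ lies in $E\setminus B_N$, so its fundamental $N$-circuit $C_N(e,B_N)$ is defined and $\mathsf{out}_{D(B_N)}(e)=C_N(e,B_N)\setminus\{e\}\subseteq B_N$. By the definition of $A_0=J_M\cup\mathsf{out}_{D(B_N)}(J_M)$, these out-neighbours are collected into $A_0\cap B_N$, so $C_N(e,B_N)$ witnesses that $A_0\cap B_N$ spans $e$ in $N$; edges of $A_0\cap B_N$ span themselves trivially.

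For the inductive step, consider an edge $e\in A_{n+1}\setminus A_n$. By the recursive definition, $e$ is either (a) an edge in $E\setminus B_M$ such that $\mathsf{out}_{D(B_M)}(e)\setminus A_n$ fails to $L$-span $e$, or (b) a $D(B_N)$-out-neighbour of such an edge in (a). Edges of type (b) lie in $B_N$, hence sit in $A_{n+1}\cap B_N$ and span themselves. For an edge $e$ of type (a), either $e\in B_N$, in which case $e\in A_{n+1}\cap B_N$ is spanned trivially, or $e\notin B_N$, in which case $\mathsf{out}_{D(B_N)}(e)=C_N(e,B_N)\setminus\{e\}$ is appended to $A_{n+1}$ in step (b), landing in $A_{n+1}\cap B_N$; the fundamental $N$-circuit $C_N(e,B_N)$ then witnesses that $A_{n+1}\cap B_N$ spans $e$ in $N$. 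All remaining edges of $A_{n+1}$ lie in $A_n$ and are spanned by $A_n\cap B_N$ by the induction hypothesis.

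The proof is really just a bookkeeping exercise chasing the recursion: the definition of $A_{n+1}$ was rigged precisely so that as soon as a non-$B_N$ edge is declared bad, the entire $N$-side of its fundamental circuit is declared bad in the same step. The only mildly delicate point is to remember that a ``freshly bad'' edge of type (a) might accidentally already lie in $B_N$, so the case distinction ``$e\in B_N$ versus $e\notin B_N$'' should be made before reaching for a fundamental circuit; no substantial obstacle arises.
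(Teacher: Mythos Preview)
Your proof is correct and follows essentially the same approach as the paper's. The paper compresses the argument to a single sentence---for each $e\in A\setminus B_N$ the construction guarantees $\mathsf{out}_{D(B_N)}(e)\subseteq A$, hence $C_N(e,B_N)\setminus\{e\}\subseteq A\cap B_N$---while you unfold this into an explicit induction on $n$; the content is the same.
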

\begin{proof}
For each $ e\in A\setminus B_N $ the construction guarantees that $ \mathsf{out}_{D(B_N)}(e)\subseteq A $, i.e. $ 
C_N(e,B_N)\subseteq A $. 
\end{proof}

\begin{lemma}\label{lem: no kappa path unopular}
There is no $ b $-path-system $ \mathcal{P} $ with $ \mathsf{ter}(\mathcal{P}) \subseteq A  $.
\end{lemma}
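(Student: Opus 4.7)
The plan is to argue by induction on $n<\omega$ that no $b$-path-system has all its terminals in $A_n$. Since $A=\bigcup_n A_n$ and $\kappa$ is uncountable regular, any putative counterexample refines to a $\kappa$-sized sub-system with terminals in some single $A_n$, so this induction implies the lemma. The base case $n=0$ is immediate from $|A_0|\leq|J_M|(1+\aleph_0)<\kappa$, as $|J_M|=r(M/I)<r(N/I)=\kappa$.

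For the inductive step, suppose $\mathcal{P}$ is a $b$-path-system with $\mathsf{ter}(\mathcal{P})\subseteq A_{n+1}$. The induction hypothesis applied to $\kappa$-sized sub-systems forces fewer than $\kappa$ terminals into $A_n$, so after refining all $\kappa$ terminals lie in $A_{n+1}\setminus A_n$, which partitions into new type-(a) edges (those $t\in E\setminus(B_M\cup A_n)$ for which $C_M(t,B_M)\setminus(A_n\cup\{t\})$ fails to $L$-span $t$) and their $D(B_N)$-out-neighbours (new type-(b) edges). Since each type-(a) edge has only finitely many type-(b) predecessors (because $|C_N(e,B_N)|<\aleph_0$), a one-step extension in $D^{-1}(B_N)$ plus a finite-to-one refinement reduces the type-(b) case to the type-(a) case, so I assume all $\kappa$ terminals are of type (a).

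For each type-(a) terminal $t$, the $L$-spanning failure forces $C_M(t,B_M)\cap(A_n\cap B_M)\neq\emptyset$. I would like to extend each $P_t$ by one arc in $D(B_M)$ into $A_n\cap B_M$, producing a $b$-path-system with terminals in $A_n$ and contradicting the induction hypothesis. A parallel induction gives $|A_n\cap B_M|<\kappa$ (type-(a) edges lie outside $B_M$, and each contributes only finitely many type-(b) elements to $I$; the dangerous case of $\kappa$-many type-(a) edges at some level is itself blocked by the key lemma together with the popularity framework described below), so pigeonhole forces many paths to target a single $f^*$, thwarting naive extension. To resolve this I apply Lemma~\ref{lem: key} with $H=\mathsf{ter}(\mathcal{P})$, $B=B_M$, and $B'=A_n\cap B_M$, obtaining $t^*\in\mathsf{ter}(\mathcal{P})$ and a $\kappa$-sized $\triangle$-system $\mathcal{D}$ of $M$-circuits with kernel $K\ni t^*$ satisfying $K\subseteq C_M(t^*,B_M)\setminus(A_n\cap B_M)$ and $\bigcup\mathcal{D}\subseteq\bigcup_{t\in H}C_M(t,B_M)$. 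The petals of $\mathcal{D}$ lie in $\mathsf{ter}(\mathcal{P})\cup I\cup J_M$; by discarding those circuits whose petals meet $J_M\setminus\mathsf{ter}(\mathcal{P})$ (using $|J_M|<\kappa$ and the regularity of $\kappa$ to preserve $\kappa$ survivors), I obtain a $\kappa$-sized sub-$\triangle$-system whose petals lie in $I\cup\mathsf{ter}(\mathcal{P})$, so $\mathcal{P}$ itself witnesses that $K$ is popular. Corollary~\ref{cor: pop sets are scrawls} then makes $K$ a scrawl of $L$, whence $t^*$ is $L$-spanned by $K\setminus\{t^*\}\subseteq C_M(t^*,B_M)\setminus(A_n\cup\{t^*\})=\mathsf{out}_{D(B_M)}(t^*)\setminus A_n$, contradicting the defining $L$-spanning failure of $t^*$.

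The main obstacle is the case when some $j\in J_M\setminus\mathsf{ter}(\mathcal{P})$ appears in $\kappa$ petals of $\mathcal{D}$: naive discarding leaves no survivors, and absorbing $j$ into the kernel would push $K$ out of $C_M(t^*,B_M)\setminus A_n$ (since $J_M\subseteq A_0\subseteq A_n$), spoiling the final $L$-spanning contradiction. I expect to handle this via an iterative refinement in which each step either re-applies the key lemma to keep the enlarged kernel inside a freshly chosen fundamental circuit disjoint from $A_n$, or exploits the abundance of circuits through $K\cup\{j\}$ to re-route $\mathcal{P}$ so that $j$ itself becomes a terminal; controlling this iteration within fewer than $\kappa$ steps using $|J_M|<\kappa$ and the regularity of $\kappa$ is the technically delicate core of the argument.
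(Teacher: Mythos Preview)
Your overall strategy---induction on $n$, reduction to terminals of ``type (a)'', then an application of the key Lemma~\ref{lem: key} to produce a popular kernel $K\ni t^*$ inside $C_M(t^*,B_M)\setminus A_n$ and derive a contradiction with the $L$-spanning failure---is exactly the paper's approach. Your third paragraph is in fact a complete proof once two points are cleaned up.

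\textbf{The ``main obstacle'' does not exist.} In a $\triangle$-system the petals are pairwise disjoint by definition: for distinct $C,C'\in\mathcal{D}$ one has $(C\setminus K)\cap(C'\setminus K)=(C\cap C')\setminus K=K\setminus K=\emptyset$. Hence any fixed $j\in J_M$ lies in at most \emph{one} petal, so at most $|J_M|<\kappa$ petals meet $J_M$. Your discarding step in paragraph three is therefore already correct and leaves $\kappa$ survivors; the scenario you worry about in paragraph four---a single $j$ in $\kappa$ petals---is impossible. No iterative refinement or re-routing is needed.

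\textbf{The choice of $B'$ should be sharpened.} You take $B'=A_n\cap B_M$ and appeal to a ``parallel induction'' for $|A_n\cap B_M|<\kappa$, but that induction is circular: bounding the number of type-(a) edges at level $n+1$ by the key lemma requires a $b$-path-system reaching them, which you do not have. The paper sidesteps this entirely by taking $B':=\mathsf{out}_{D(B_M)}(\mathsf{ter}(\mathcal{P}))\cap A_n$. This is a subset of your $B'$, and its size is bounded directly from the induction hypothesis: if one could choose pairwise distinct $f_e\in\mathsf{out}_{D(B_M)}(e)\cap A_n$ for $\kappa$ many $e\in\mathsf{ter}(\mathcal{P})$, then extending each path by the arc $ef_e$ (the $f_e$ lie in $A_n$, hence off every path after trimming $\mathcal{P}$ so that no path meets $A_n$) yields a $b$-path-system terminating in $A_n$, contradicting the hypothesis for $n$. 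The impossibility of such a choice, together with the finiteness of each $\mathsf{out}_{D(B_M)}(e)$, forces $|B'|<\kappa$. With this $B'$ the key lemma applies and your paragraph-three argument goes through verbatim.
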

\begin{proof}
Since $ \kappa $ is an uncountable regular cardinal,   it is enough to show that for every $ 
n<\omega $  there is no $ b $-path-system $ \mathcal{P} $ with  $ \mathsf{ter}(\mathcal{P}) \subseteq 
A_n  $. We apply induction on $ n $. Since $ \left|J_M\right|<\kappa $, and each $ e\in J_M $ has only finitely many out-neighbours in $ 
D(B_N) $, we conclude that $ \left|A_0\right|<\kappa $.  Assume that we know the statement  for an $ n< \omega$ and suppose for a contradiction 
that $ \mathcal{P} $ is a $ b $-path-system with $ \mathsf{ter}(\mathcal{P})\subseteq A_{n+1} $. By the induction hypotheses less than $ 
\kappa $ many paths in $ \mathcal{P} $ meets $ A_n $. Hence by trimming $ \mathcal{P} $ we can assume that no $ P\in \mathcal{P} $ meets $ 
A_n 
$. 
\begin{claim}
We can assume without loss of generality that $ \mathsf{ter}(\mathcal{P})\subseteq A_{n+1}\setminus B_M $.
\end{claim}

 \begin{proof}
 If $ \kappa $ many $ P\in \mathcal{P} 
$ meets $ A_{n+1}\setminus B_M $, then we are done by trimming $ \mathcal{P} $. Otherwise we may assume that no $ P\in \mathcal{P} $ 
meets $ 
A_{n+1}\setminus 
B_M $. Note that every $e\in (A_{n+1}\setminus A_0)\cap 
B_M  $ is an 
element of $ B_N $ (because $ B_M\setminus B_N \subseteq A_0 $) and has an in-neighbour in $ D(B_N) $  that is in $ 
A_{n+1}\setminus B_M $ 
by 
construction. Therefore every $ P\in \mathcal{P} $ can 
be 
extended by a new arc from $ D^{-1}(B_N) $ to reach $ A_{n+1}\setminus B_M $. Although these extended paths are  not 
necessarily pairwise 
disjoint, 
one terminal point is 
shared  only by finitely many of them because each  $ e\in  E\setminus  B_N$ has finite in-degree in $ D^{-1}(B_N) $. Thus we can pick $ \kappa 
$ 
many 
disjoint from them and the resulting path-system  terminates in $ A_{n+1}\setminus B_M $.
 \end{proof}

The definition of $ A_{n+1} $ ensures that for each $ e\in \mathsf{ter}(\mathcal{P})\subseteq A_{n+1}\setminus B_M $ we have $ 
\mathsf{out}_{D(B_M)}(e)\cap A_n\neq \emptyset $. By the induction hypotheses we know that it is impossible to pick pairwise distinct  
out-neighbours $ f_e\in A_n $ for $ e\in 
\mathsf{ter}(\mathcal{P}) $ in $ D(B_M) $. But then, since every $ e \in \mathsf{ter}(\mathcal{P}) $ has finite out-degree in $ D(B_M) $, the set 
$B':=\mathsf{out}_{D(B_M)}(\mathsf{ter}(\mathcal{P}))\cap A_n $ has size smaller than $ \kappa $. Thus Lemma \ref{lem: key} applied with $ 
M $, 
$ H=\mathsf{ter}(P) $, $ 
B=B_M $, and $ B'$  provides an $ e^{*}\in 
\mathsf{ter}(P)  $ and a popular set $ K $  for which $ e^{*}\in K \subseteq  C_M(e^{*},B)\setminus A_n $. Since $ K $ is a scrawl in $ 
L $ (see Corollary \ref{cor: pop sets are scrawls}), 
we conclude that  $  e^{*}\in \mathsf{span}_L(\mathsf{out}_{D(B_M)}(e^{*})\setminus A_n) $. This contradicts $e^{*}\in 
A_{n+1}\setminus B_M  $ by the definition of $ A_{n+1} $.
\end{proof}

\begin{lemma}\label{no kappa petal in A}
If $ K $ is a popular set and the $ b $-path-system $ \mathcal{P} $ and $ \triangle $-system $ \mathcal{D} $ witnessing this, then less than $ 
\kappa $  petals of $ \mathcal{D} $ meet $ A $.
\end{lemma}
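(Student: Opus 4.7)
The plan is to argue by contradiction: suppose $\kappa$ petals of $\mathcal{D}$ meet $A$, and use this data to manufacture a $b$-path-system of size $\kappa$ whose terminals lie in $A$, directly contradicting Lemma~\ref{lem: no kappa path unopular}. Since each petal of $\mathcal{D}$ is contained in $I \cup \mathsf{ter}(\mathcal{P})$ and the petals are pairwise disjoint, choosing $g_\alpha \in (C_\alpha \setminus K) \cap A$ from each offending petal furnishes $\kappa$ distinct elements of $A \cap (I \cup \mathsf{ter}(\mathcal{P}))$. Regularity of $\kappa$ splits the situation into two cases. If $\kappa$ many $g_\alpha$ fall in $\mathsf{ter}(\mathcal{P})$, then the $\kappa$ corresponding paths in $\mathcal{P}$ already form the desired bad $b$-path-system, contradicting the lemma immediately.

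The interesting case is when $\kappa$ many $g_\alpha$ lie in $I \subseteq B_M$. Because $g_\alpha \in C_\alpha \cap B_M$, Claim~\ref{clm: arc from circle} produces some $e_\alpha \in C_\alpha \setminus B_M$ with $g_\alpha \in C_M(e_\alpha, B_M)$, so $e_\alpha g_\alpha \in D_M(B_M) \subseteq D(b)$. Since $C_\alpha \subseteq K \cup I \cup \mathsf{ter}(\mathcal{P})$ and $e_\alpha \notin B_M \supseteq I$, we must have $e_\alpha \in K \cup \mathsf{ter}(\mathcal{P})$. The subcase $e_\alpha \in K$ for $\kappa$-many $\alpha$ is routinely ruled out: $K$ is finite, so some fixed $e^* \in K$ would serve as $e_\alpha$ for $\kappa$-many $\alpha$, forcing the distinct $g_\alpha$ into the finite set $C_M(e^*, B_M)$. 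After trimming, each $e_\alpha \in \mathsf{ter}(\mathcal{P})$ is the terminal of a uniquely determined $P_\alpha \in \mathcal{P}$, and the natural move is to build $P_\alpha'$ either by appending the arc $e_\alpha g_\alpha$ when $g_\alpha \notin V(P_\alpha)$, or by truncating $P_\alpha$ at $g_\alpha$ otherwise. In either case $P_\alpha'$ is a $b$-path terminating at $g_\alpha \in A$; non-triviality is automatic because $g_\alpha \in B_M$ while $\mathsf{in}(P_\alpha) \in B_N \setminus B_M$.

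The main obstacle — essentially the only combinatorially delicate step — is securing pairwise disjointness of the family $\{P_\alpha'\}$. Since $V(P_\alpha') \subseteq V(P_\alpha) \cup \{g_\alpha\}$ and the $V(P_\alpha)$ are already pairwise disjoint, a conflict can only arise when some $g_\alpha$ happens to lie in $V(P_\beta)$ for a $\beta \neq \alpha$. I would encode these conflicts as a graph $G$ on the current index set, with an edge between $\alpha$ and $\beta$ whenever $g_\alpha \in V(P_\beta)$ or $g_\beta \in V(P_\alpha)$. Distinctness of the $g_\bullet$'s together with pairwise disjointness of the paths in $\mathcal{P}$ bounds the $G$-degree at $\alpha$ by $|V(P_\alpha)| + 1 < \omega$, so $G$ is locally finite. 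A greedy transfinite recursion of length $\kappa$ — feasible since $\kappa$ is uncountable regular and the neighbourhoods in $G$ are finite — then extracts an independent set $S' \subseteq \kappa$ of cardinality $\kappa$. The resulting family $\{P_\alpha' : \alpha \in S'\}$ is a pairwise disjoint $b$-path-system of size $\kappa$ with terminals in $A$, contradicting Lemma~\ref{lem: no kappa path unopular} and completing the proof.
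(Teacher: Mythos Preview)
Your proof is correct and follows essentially the same strategy as the paper: split according to whether the bad petal-element lies in $\mathsf{ter}(\mathcal{P})$ or in $I$, and in the latter case use Claim~\ref{clm: arc from circle} to manufacture an arc into $A$, ruling out the kernel via finiteness of $K$ and of fundamental circuits.

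The one place you work harder than necessary is the disjointness of the extended paths. The paper observes that, by Lemma~\ref{lem: no kappa path unopular}, fewer than $\kappa$ paths of $\mathcal{P}$ meet $A$ at all; after trimming those away (and the corresponding petals), every remaining $g_\alpha\in A$ automatically avoids every remaining $V(P_\beta)$, so the extensions are pairwise disjoint for free and the truncation case never arises. Your conflict-graph argument is perfectly valid, but this preliminary trimming makes it unnecessary.
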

\begin{proof}
There are less than $ \kappa $ petals $ p $ of $ \mathcal{D} $ with $ (p\setminus I)\cap A\neq \emptyset $ because each $ e \in p\setminus I $ is a 
terminal point of a $ P\in \mathcal{P} $ and Lemma \ref{lem: no kappa path unopular} ensures that less than $ \kappa $ many $ P\in 
\mathcal{P} $ meets $ A $. By trimming, we may assume that no $ P\in \mathcal{P} $ meets $ A $ and $ (p\setminus I)\cap A= \emptyset $ for 
each petal $ p $ of $ \mathcal{D} $. Suppose for a contradiction that there are $ \kappa $ many petals with $ p \cap I\cap A\neq \emptyset $. Then 
we can assume by trimming that $ p \cap I\cap A\neq \emptyset $ for every petal $ p $ of $ \mathcal{D} $. 
\begin{claim}\label{clm: petal arc}
For all but finitely many petals $ p $ of $ \mathcal{D} $  there 
is an arc from $ p\setminus I $ to $ p\cap I \cap A $ in $ D(B_M) $.
\end{claim}
\begin{proof}
The claim follows from Claim \ref{clm: arc from circle} combined with the fact that the kernel $ K $ of $ \mathcal{D} $ has 
a finite out-neighbourhood in 
$ D(B_M) $. In more details, we proceed as follows. We know by Claim \ref{clm: arc from circle} that for every $ C\in \mathcal{D} $ and $f\in 
C\cap B_M $ there exists an $e_{C,f}\in 
C\setminus B_M $ such that $ e_{C,f}f\in D(B_M) $.  For every $ C\in \mathcal{D} $, pick  an $ f_C\in (C\setminus 
K)\cap I\cap A $. We claim that $ e_{C,f_C}f_C $ is a 
desired 
arc for 
all but finitely many $ C\in \mathcal{D} $. Indeed, if $e_{C,f_C}\notin  p\setminus B_M=p\setminus I   $, then $ e_{C,f_C} $ must be in $ 
K\setminus B_M $. But since $ K $ is finite and every $ e\in E $ has finite out-degree in  $ D(B_M) $, this can happen only for 
finitely many $ C\in \mathcal{D} $.
\end{proof}

Each arc described in Claim \ref{clm: petal arc}  has its tail in $ \mathsf{ter}(\mathcal{P}) $ and its head in $ A $. They provide 
 extensions for $ \kappa $ many $ P\in \mathcal{P} $ to reach $ A $. Since the extended 
paths are still pairwise disjoint and terminate in $ A $, this contradicts Lemma \ref{lem: no kappa path unopular}.

\end{proof}
\subsection{Construction of a hindrance}
 Let $ \boldsymbol{\mathcal{P}}:=\left\langle P_\alpha:\ \alpha<\xi \right\rangle  $ be a maximal sequence (meaning it cannot be extended 
 further) for 
which:

\begin{enumerate}
\item\label{item: basic} $ P_\alpha $ is a $ b $-path without shortcuts for which $ \mathsf{ter}(\mathcal{P}_\alpha)\in 
E\setminus B_M $;
\item\label{item: independent} $ (B_M\setminus A)\cup \mathsf{ter}(\mathcal{P}) \in \mathcal{I}(M) $, where 
$\mathsf{ter}(\mathcal{P}):= \{ \mathsf{ter}(P_\alpha):\ \alpha<\xi \} $; 
\item\label{item: avoids} $ P_\alpha $ meets neither $ A $ nor any escorting circuit of $ P_\beta $ nor $ 
C_M(\mathsf{ter}(P_\beta), B_M)  $  for $ \beta<\alpha $;
\item\label{item: legit} for every $ \alpha<\xi $ and  $ef\in A(P_\alpha)\cap D(B_M) $, the edge $ e $ is $ M $-spanned by $ 
(B_M\setminus A)\cup \{ 
\mathsf{ter}(P_\beta):\ \beta<\alpha \} $.
\end{enumerate}
Properties (\ref{item: basic}) and (\ref{item: avoids}) ensure in particular that $b\circ \mathcal{P}=: 
(\boldsymbol{B_M'},\boldsymbol{B_N'}) $ is well defined (see Lemma \ref{lem: base exchange path sequence}). 
\begin{claim}\label{clm: xi smallal kappa}
$ \xi<\kappa $.
\end{claim}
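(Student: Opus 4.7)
The plan is to argue by contradiction: assuming $\xi\geq\kappa$, I will show that the first $\kappa$ paths of $\mathcal{P}$ can each be extended by a single arc in $D(B_M)$ to produce a $b$-path-system with terminals in $A$, contradicting Lemma~\ref{lem: no kappa path unopular}.

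For $\alpha<\kappa$, set $C_\alpha := C_M(\mathsf{ter}(P_\alpha),B_M)$ and $T_\alpha := C_\alpha\cap A\cap B_M$. Since $C_\alpha\subseteq B_M\cup\{\mathsf{ter}(P_\alpha)\}$ and $\mathsf{ter}(P_\alpha)\notin A$ by~(\ref{item: avoids}), the set $T_\alpha$ would be empty exactly when $C_\alpha\subseteq (B_M\setminus A)\cup\{\mathsf{ter}(P_\alpha)\}$, i.e.\ when $\mathsf{ter}(P_\alpha)\in\mathsf{span}_M(B_M\setminus A)$. But~(\ref{item: independent}) rules this out, so $T_\alpha$ is a nonempty (finite) subset of $A\cap B_M$. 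For any choice $f_\alpha\in T_\alpha$ the arc $\mathsf{ter}(P_\alpha)f_\alpha$ lies in $D(B_M)$, and appending it to $P_\alpha$ gives a $b$-path $P_\alpha^{+}$ with $\mathsf{ter}(P_\alpha^{+})=f_\alpha\in A$.

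The core step is the following rank sub-claim: for every $F\subseteq A\cap B_M$ with $|F|<\kappa$, at most $|F|$ indices $\alpha<\kappa$ satisfy $T_\alpha\subseteq F$. Indeed, if $T_\alpha\subseteq F$ then $C_\alpha\subseteq (B_M\setminus A)\cup F\cup\{\mathsf{ter}(P_\alpha)\}$, so in the contraction $M/(B_M\setminus A)$ the element $\mathsf{ter}(P_\alpha)$ lies in $\mathsf{span}(F)$. Since~(\ref{item: independent}) makes $\{\mathsf{ter}(P_\alpha):\alpha<\kappa\}$ independent in $M/(B_M\setminus A)$, at most $|F|$ of these terminals can lie in a set of rank at most $|F|$. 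A transfinite recursion of length $\kappa$ then picks pairwise distinct $f_\alpha\in T_\alpha$: at each stage the set $F$ of already chosen values has size $<\kappa$, and by the sub-claim the pool of `unblocked' indices is still of size $\kappa$, so some new $\alpha$ admits an $f_\alpha\in T_\alpha\setminus F$.

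It remains to verify that the paths $P_\alpha^{+}$ are pairwise disjoint. The original $P_\alpha$'s are disjoint by~(\ref{item: avoids}); the $f_\alpha$'s are distinct by construction; and since~(\ref{item: avoids}) also says each $P_\beta$ avoids $A$, the point $f_\alpha\in A$ cannot lie on any $P_\beta$. Hence $\{P_\alpha^{+}:\alpha<\kappa\}$ is a $b$-path-system whose terminals all lie in $A$, directly contradicting Lemma~\ref{lem: no kappa path unopular}. The only non-routine ingredient is the rank sub-claim above; once it is in place, the greedy choice of the $f_\alpha$'s and the disjointness verification are immediate.
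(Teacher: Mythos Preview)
Your proof is correct and follows essentially the same approach as the paper's. Both arguments hinge on the same two ingredients: the rank argument in $M/(B_M\setminus A)$ showing that the terminals cannot all be spanned by a small subset of $A$, and the extension of the $P_\alpha$'s by a single $D(B_M)$-arc into $A$ to contradict Lemma~\ref{lem: no kappa path unopular}. The only difference is the logical order: the paper first assumes no system of distinct representatives exists, deduces $\left|\bigcup_\alpha T_\alpha\right|<\kappa$, and then applies the rank argument for a contradiction; you instead use the rank sub-claim up front to \emph{construct} the distinct $f_\alpha$'s by a greedy recursion, and then invoke Lemma~\ref{lem: no kappa path unopular} directly. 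This is a harmless reorganisation of the same idea.
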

\begin{proof}
For every $ \alpha<\xi $,  the edge $ \mathsf{ter}(P_\alpha)\in E\setminus B_M $ is not $ M $-spanned by $ B_M\setminus A $ (see properties  
(\ref{item: basic}) and (\ref{item: independent})). Therefore we must have $ 
C_M(\mathsf{ter}(P_\alpha),B_M)\cap A\neq \emptyset $.  Assume for the sake of contradiction that $ \xi \geq \kappa $. Then  there are no 
pairwise distinct $ f_\alpha \in C_M(\mathsf{ter}(P_\alpha),B_M) \cap A $ for $ \alpha<\xi $ because otherwise the corresponding extensions of 
the paths $ P_\alpha $ 
contradict Lemma \ref{lem: no kappa path unopular}. Since each $ \mathsf{ter}(P_\alpha) $ has finite out-degree in $ D(B_M) $, this implies that 
the size of the set  $\boldsymbol{G}:=\bigcup_{\alpha<\xi}C_M(\mathsf{ter}(P_\alpha),B_M) \cap A$ must be less than $ \kappa $.  The set $\{ 
\mathsf{ter}(P_\alpha):\ \alpha<\xi \}$ is $ 
M/(B_M\setminus A) $-independent  by property (\ref{item: independent}). Moreover, from the definition of $ G $ it is clear that  $\{ 
\mathsf{ter}(P_\alpha):\ \alpha<\xi \}$ is $ M/(B_M\setminus A) $-spanned by $ G $. But then in the matroid $ M/(B_M\setminus A) $, the 
independent set $\{ 
\mathsf{ter}(P_\alpha):\ \alpha<\xi \}$ of size at least $ \kappa $ is spanned by the set $ G $ where $ \left|G\right|<\kappa $, a contradiction. 
\end{proof}
\begin{claim}\label{clm: original base E minus A}
$ (B_M\setminus A)\cup \mathsf{ter}(\mathcal{P})\in \mathcal{B}(M\setminus A) $. 
\end{claim}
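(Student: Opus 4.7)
The independence of $X := (B_M \setminus A) \cup \mathsf{ter}(\mathcal{P})$ in $M \setminus A$ is immediate: property (2) gives $X \in \mathcal{I}(M)$, and property (3) ensures $\mathsf{ter}(\mathcal{P}) \cap A = \emptyset$, so $X \subseteq E \setminus A$ and therefore $X \in \mathcal{I}(M \setminus A)$. The substantive half is spanning, which I plan to establish by contradiction using the maximality of $\mathcal{P}$.

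Suppose for contradiction $X$ is not spanning in $M \setminus A$ and pick $e^* \in (E \setminus A) \setminus \mathsf{span}_M(X)$. Since $B_M \setminus A \subseteq X$, necessarily $e^* \in E \setminus (B_M \cup A)$. Observation \ref{obs: good are spanned} then produces an $L$-circuit $C$ with $e^* \in C \subseteq (B_M \setminus A) \cup \{e^*\}$; and $C$ cannot be an $M$-circuit, for otherwise uniqueness of the fundamental circuit would give $C = C_M(e^*, B_M) \subseteq X \cup \{e^*\}$, contradicting $e^* \notin \mathsf{span}_M(X)$. Hence $C$ is a minimal popular set, and popularity furnishes a $b$-path-system $\mathcal{P}'$ together with a $\kappa$-sized $\triangle$-system $\mathcal{D}$ of $M$-circuits with kernel $C$ and petals contained in $I \cup \mathsf{ter}(\mathcal{P}')$.

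The plan is now to extract from $\mathcal{P}'$ a single path $P_\xi$ such that $\mathcal{P} \cup \{P_\xi\}$ still satisfies (1)--(4), contradicting the maximality of $\mathcal{P}$. By Claim \ref{clm: xi smallal kappa} we have $\xi<\kappa$, so the ``obstruction set'' consisting of $\mathsf{ter}(\mathcal{P})$, all escorting circuits of the $P_\beta$, and all fundamental circuits $C_M(\mathsf{ter}(P_\beta), B_M)$ for $\beta<\xi$ has cardinality strictly less than $\kappa$. Combining this bound with Lemmas \ref{lem: no kappa path unopular} and \ref{no kappa petal in A}, I trim $\mathcal{P}'$ and $\mathcal{D}$ to retain $\kappa$ many petals disjoint from $A \cup \mathsf{ter}(\mathcal{P})$ and $\kappa$ many shortcut-free paths vertex-disjoint from the obstruction set and from $A$; a further uniformity refinement (fixing the finite shape of petals) allows me to assume that all the remaining petals share a common value $n := |p \setminus I|$.

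The main obstacle, and where the popularity matroid machinery does its essential work, is verifying properties (2) and (4) for the chosen $P_\xi$. For (2), if the terminal $t := \mathsf{ter}(P_\xi) \in p \setminus I$ were $M$-spanned by $X$ via some $M$-circuit $D \subseteq X \cup \{t\}$, then strong circuit elimination between $D$ and the $M$-circuit $C \cup p \in \mathcal{D}$ (eliminating $t$, preserving $e^*$) produces an $M$-circuit through $e^*$ contained in $X \cup \{e^*\} \cup ((p \setminus I) \setminus \{t\})$; iterating this elimination across the remaining $n-1$ elements of $p \setminus I$ using further $M$-circuits from $\mathcal{D}$ collapses the extra terms and yields $e^* \in \mathsf{span}_M(X)$, the desired contradiction. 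Property (4) is handled by a parallel strong circuit elimination argument: for each $D(B_M)$-arc $ef$ of $P_\xi$ the trimming keeps the vertices of $P_\xi$ away from $A$, so the fundamental circuit $C_M(e, B_M)$ can be combined with suitable popular sets from $\mathcal{O}$ (drawn from the same $\mathcal{D}$-type structure) to exhibit $e \in \mathsf{span}_M(X)$. Coordinating these strong circuit eliminations coherently across all arcs and all remaining petals is the most delicate step of the argument.
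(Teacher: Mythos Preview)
Your overall architecture matches the paper's: independence is immediate from properties (2) and (3), and for spanning you take an unspanned $e^*\in E\setminus(B_M\cup A)$, invoke Observation~\ref{obs: good are spanned} to get an $L$-circuit $K\ni e^*$ with $K\setminus\{e^*\}\subseteq B_M\setminus A$ that is not an $M$-circuit, unpack popularity into a $b$-path-system and a $\triangle$-system $\mathcal{D}$, trim using Claim~\ref{clm: xi smallal kappa} and Lemmas~\ref{lem: no kappa path unopular}, \ref{no kappa petal in A}, and then try to produce a $P_\xi$ contradicting maximality. The gap is in how you secure properties (2) and (4) for $P_\xi$.

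Your verification of (2) does not go through. After one strong circuit elimination you land in $X\cup\{e^*\}\cup((p\setminus I)\setminus\{t\})$, and you propose to ``iterate across the remaining $n-1$ elements of $p\setminus I$ using further $M$-circuits from $\mathcal{D}$''. But the other circuits in $\mathcal{D}$ have the form $K\cup p'$ for petals $p'$ \emph{disjoint} from $p$; none of them contains any element of $p\setminus\{t\}$, so they are useless for eliminating those edges. What actually works (and what the paper does) is the direct observation that if \emph{every} element of a petal $p$ were $M$-spanned by $B$, then since $K\setminus\{e^*\}\subseteq B$ the whole circuit $K\cup p$ minus $e^*$ would be spanned, forcing $e^*\in\mathsf{span}_M(B)$. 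Hence each petal contains some $e_p$ not $M$-spanned by $B$, and one should aim for such an $e_p$ rather than fix $t$ in advance and argue by contradiction.

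Your treatment of (4) is where the real idea is missing. You need every tail $e$ of a $D(B_M)$-arc on $P_\xi$ to be $M$-spanned by $B$; there is no reason an arbitrary intermediate vertex of a path from $\mathcal{P}'$ should satisfy this, and ``combining $C_M(e,B_M)$ with suitable popular sets from $\mathcal{O}$'' does not produce it (the popular sets at hand go through $e^*$, not through $e$). The paper's device is to take the path $Q_p$ with $\mathsf{ter}(Q_p)=e_p$ and let $P_\xi$ be its \emph{initial segment up to the first vertex $f$ not $M$-spanned by $B$}. Then every vertex preceding $f$ is spanned by $B$, so (4) holds automatically for all $D(B_M)$-arcs of $P_\xi$, and (2) holds because $\mathsf{ter}(P_\xi)=f\notin\mathsf{span}_M(B)$. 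This single move replaces both of your circuit-elimination programmes.
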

\begin{proof}
Let $ \boldsymbol{B}:=(B_M\setminus A)\cup \mathsf{ter}(\mathcal{P}) $. Clearly, $ B\subseteq E\setminus A $  because  no $ P_\alpha $ meets 
$ A $ (see 
property (\ref{item: avoids})). Property (\ref{item: 
independent}) ensures $ B\in \mathcal{I}(M\setminus A) $. Suppose for a contradiction, that $ e\in (E\setminus A)\setminus \mathsf{span}_M(B) 
$. 
Since $ e\in E\setminus (B_M \cup A) $, 
we know that
  $ \mathsf{out}_{D(B_M)}(e)\setminus A $ spans $ e $ in $ L $ (see Observation \ref{obs: good are spanned}).  Since in $ M $ the set $ 
  B_M\setminus A 
  \supseteq \mathsf{out}_{D(B_M)}(e)\setminus A $ does not span $ e $,  the $ L $-spanning must be through an $ L $-circuit $ \boldsymbol{K} $ 
  that is not an 
  $ M $-circuit.  Therefore there is a $ b 
  $-path-system $ 
  \boldsymbol{\mathcal{Q}} $ and a $ \kappa $-sized $ \triangle $-system $ \boldsymbol{\mathcal{D}} $ of $ M $-circuits with kernel $ K\ni e $  
  such that every 
  petal of $ 
  \mathcal{D} $ is included in $ I\cup \mathsf{ter}(\mathcal{Q}) $ and  $ K\setminus \{ e \}\subseteq  \mathsf{out}_{D(B_M)}(e)\setminus A$. 
  
By trimming $ \mathcal{Q} $ and $ \mathcal{D} $, we may assume that no $ Q\in \mathcal{Q} $ meets $ A $ (see Lemma \ref{lem: no kappa 
path unopular}) and no petal of $ \mathcal{D} $ meets $ A $ (see Lemma \ref{no kappa petal in A}). Note that $ K\setminus \{ e \}\subseteq  
\mathsf{out}_{D(B_M)}(e)\setminus 
A \subseteq B_M\setminus A \subseteq B$.  Therefore in each petal $ p $ of $ \mathcal{D} $ there exists an $ \boldsymbol{e_p}\in p $ that is not 
$ M $-spanned 
by $ B $, since otherwise for the unique $ C\in \mathcal{D} $ with $ C \supseteq p $ the set $ B $ spans each edge of $ C\setminus \{ e \} $ in $ M 
$ and thus $ e $ as well. Then $ e_p\notin I $ because $e_p \notin  B_M\setminus A=I\setminus A $ (since $ B_M\setminus B_N\subseteq A $) 
and  $ e_p \notin A $ since we trimmed $ \mathcal{D} 
$ to ensure $ p \cap A=\emptyset $. It follows that there is a $ Q_p\in \mathcal{Q} $ with $ \mathsf{ter}(Q)=e_p $. The set $ 
  \boldsymbol{\mathcal{Q}'}:=\{ Q_p:\ p \text{ is a petal of } \mathcal{D} \}\subseteq \mathcal{Q} $  has size $ \kappa $. 
  
In order to get a contradiction, we show that $ \left\langle P_\alpha:\ \alpha<\xi  \right\rangle $ can be continued.  Obviously,  $e_p\in E\setminus 
B_M $ for each petal $ p $ of $ \mathcal{D} $ because $ e_p\notin B_M\setminus A $ and $ e_p\notin A $. By shortening the paths in $ 
\mathcal{Q}' $ along shortcuts, we 
  can assume that they have no shortcuts. Since $ \xi<\kappa $ 
    (see Claim \ref{clm: xi smallal kappa}), the union $ \boldsymbol{U} $ of the escorting circuits of the paths $ P_\alpha $ and the circuit $ 
    C_M(\mathsf{ter}(P_\alpha), B_M) $ for $ \alpha<\xi $ has 
    size smaller than $ \kappa $. Pick a $ Q_p\in \mathcal{Q}' $ that avoids $ U $ and  let $ 
    P_\xi $ be the initial segment of $ Q_p $ until the first edge $ f $ that is not $ M $-spanned by $ 
    B $. Then $ f $ is well defined because $ \mathsf{ter}(Q)=e_p $ is a candidate for $ f $. Finally, the sequence $ 
    \left\langle P_\alpha:\ \alpha<\xi+1  \right\rangle $ contradicts the maximality of $ 
        \left\langle P_\alpha:\ \alpha<\xi  \right\rangle $.
\end{proof}

\begin{observation}\label{obs: same bad in BN}
$ B_N\cap A=B_N'\cap A $. 
\end{observation}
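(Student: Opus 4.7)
The plan is to argue that the symmetric difference $B_N \triangle B_N'$ is confined to the vertex set of the path-system $\mathcal{P}$, and then to invoke property (\ref{item: avoids}) of the maximal sequence to conclude that this vertex set avoids $A$ entirely. This should give $B_N \cap A = B_N' \cap A$ almost immediately, with no serious obstacle beyond unwinding the definitions.

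In detail, I would first recall from Lemma \ref{lem: base exchange path sequence} that $B_N'$ is obtained from $B_N$ by swapping along the $N$-arcs of $A(\mathcal{P})$, i.e.\ along the arcs in $A(\mathcal{P}) \cap D_N^{-1}(B_N)$. In particular, every edge that enters $B_N'$ or leaves $B_N$ is an endpoint of some arc belonging to some $P_\alpha$, hence a vertex of some $P_\alpha$. Consequently
\[
B_N \triangle B_N' \;\subseteq\; E(\mathcal{P}) \;:=\; \bigcup_{\alpha<\xi} E(P_\alpha).
\]

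Next I would use property (\ref{item: avoids}) of the maximal sequence $\mathcal{P}$, which asserts that $P_\alpha$ does not meet $A$ for any $\alpha<\xi$. This yields $E(\mathcal{P}) \cap A = \emptyset$, and combining with the inclusion above gives $(B_N \triangle B_N') \cap A = \emptyset$. Equivalently $B_N \cap A = B_N' \cap A$, as required.

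There is essentially no hard step here; the content of the observation is just that the swaps producing $B_N'$ happen entirely outside the set of bad edges, which is exactly what property (\ref{item: avoids}) was set up to guarantee. The only thing to be mindful of is correctly matching the convention in Lemma \ref{lem: base exchange path sequence} (since $N$-arcs of $D(b)$ live in $D_N^{-1}(B_N)$), but this is a bookkeeping matter and does not affect the argument.
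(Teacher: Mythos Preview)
Your proof is correct and follows essentially the same approach as the paper: the paper's one-line argument is that the paths $P_\alpha$ avoid $A$ by property (\ref{item: avoids}), so no edge of $A$ is added to or removed from $B_N$ in forming $B_N'$. Your version just makes the containment $B_N \triangle B_N' \subseteq E(\mathcal{P})$ explicit before invoking property (\ref{item: avoids}).
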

\begin{proof}
The paths $ P_\alpha $ avoid $ A $ (see property (\ref{item: avoids})), thus no $ e\in A $ is added or removed from $ B_N $ as we constructed $ 
B_N' $.
\end{proof}
We set 
$\boldsymbol{H^{*}}:= (B_M'\setminus A) \cup \mathsf{ter}(\mathcal{P})  $.
\begin{claim}\label{clm: H subset BN}
$ H^{*}\subseteq B_N' $.
\end{claim}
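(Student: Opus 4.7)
The plan is to combine the description of $B_M'$ and $B_N'$ given by Lemma~\ref{lem: base exchange path sequence} with a careful analysis of the structure of the $b$-paths $P_\alpha$. The key observation is that $B_M\setminus B_N=J_M\subseteq A_0\subseteq A$, while by property~(\ref{item: avoids}) no $P_\alpha$ meets $A$. It follows that every vertex $v$ on a $P_\alpha$ belongs either to $B_M\cap B_N=I$ or to $E\setminus(B_M\cup B_N)$. A quick inspection of the two arc types in $D(b)$ then shows that an $I$-vertex on $P_\alpha$ must be entered by a $D(B_M)$-arc and exited by a $D^{-1}(B_N)$-arc, while a vertex off $B_M\cup B_N$ is entered by a $D^{-1}(B_N)$-arc and exited by a $D(B_M)$-arc. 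Hence the arc types of each $P_\alpha$ strictly alternate, and since $\mathsf{ter}(P_\alpha)\notin B_M$ by property~(\ref{item: basic}) the last arc always lies in $D^{-1}(B_N)$. The same property~(\ref{item: avoids}) also implies that the paths $P_\alpha$ are pairwise vertex-disjoint, because every vertex of $P_\beta$ sits on one of the escorting circuits of $P_\beta$.

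Using these two observations, the claim $H^{*}\subseteq B_N'$ reduces to three small verifications. First, a terminal vertex $\mathsf{ter}(P_\alpha)$ is the head of a $D^{-1}(B_N)$-arc in $A(\mathcal{P})$, so the $N$-half of Lemma~\ref{lem: base exchange path sequence} puts it into $B_N'$. Second, for $e\in (B_M'\setminus A)\cap B_M$, the fact that $e$ survives in $B_M'$ means $e$ is not the head of any $D(B_M)$-arc in $A(\mathcal{P})$; the alternation together with vertex-disjointness then forces $e$ off every $P_\alpha$, so $e$ is also not the tail of any $D^{-1}(B_N)$-arc in $A(\mathcal{P})$. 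Since $B_M\setminus A\subseteq B_N$, the element $e$ remains untouched in $B_N$ and hence $e\in B_N'$. Third, for $e\in B_M'\setminus(B_M\cup A)$, the element $e$ is the tail of some arc $ef\in A(P_\alpha)\cap D(B_M)$; either $ef$ is the first arc of $P_\alpha$, in which case $e=\mathsf{in}(P_\alpha)\in B_N\setminus B_M$ and vertex-disjointness keeps $e$ in $B_N$, or by the alternation the preceding arc of $P_\alpha$ lies in $D^{-1}(B_N)$ with $e$ as its head, so $e$ is added to $B_N$. Either way, $e\in B_N'$.

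The main obstacle is the purely bookkeeping one of reconciling the opposite sign conventions for base exchange in the two matroids: a head of a $D(B_M)$-arc is \emph{removed} from $B_M$, whereas a head of a $D^{-1}(B_N)$-arc is \emph{added} to $B_N$. One has to be sure that no edge is simultaneously ``added'' by one mechanism and ``removed'' by another, which is exactly what the alternation structure and the pairwise vertex-disjointness of the paths guarantee. Once these two structural facts are in place, the verification above is mechanical and requires no further ideas.
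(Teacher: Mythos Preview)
Your approach is essentially the paper's: both proofs rest on $B_M\setminus B_N=J_M\subseteq A$ together with property~(\ref{item: avoids}), and then analyse how each edge of $H^{*}$ either enters or remains in $B_N'$. Your alternation observation packages the case analysis more neatly, but the three verifications you perform correspond exactly to the paper's cases $\mathsf{ter}(\mathcal{P})$, $(B_M'\cap B_M)\setminus A$, and $B_M'\setminus B_M$.

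There are two small gaps, both tied to trivial paths. First, a $b$-path $P_\alpha$ may consist of a single vertex in $B_N\setminus B_M$ (the paper treats this case explicitly); then $\mathsf{ter}(P_\alpha)$ is not the head of any arc, so your first verification does not apply. The fix is immediate --- the vertex is already in $B_N$ and stays there --- but you should say so. Second, your argument for pairwise vertex-disjointness (``every vertex of $P_\beta$ sits on one of the escorting circuits of $P_\beta$'') fails when $P_\beta$ is trivial, since it has no escorting circuits; here you need the additional clause in property~(\ref{item: avoids}) that $P_\alpha$ avoids $C_M(\mathsf{ter}(P_\beta),B_M)\ni\mathsf{ter}(P_\beta)$. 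Relatedly, your opening assertion that every vertex on $P_\alpha$ lies in $I$ or in $E\setminus(B_M\cup B_N)$ is not quite right: the initial vertex lies in $B_N\setminus B_M$, a fact you yourself invoke in your third verification.
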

\begin{proof}
By property (\ref{item: basic}) each $ P_\alpha $ terminates in $ E\setminus B_M $. Thus if $ P_\alpha $ is non-trivial (i.e. 
has at least one arc), then its last arc  
must be in $ D^{-1}(B_N) $ which ensures that $ \mathsf{ter}(P_\alpha)\in B_N' $. If $ P_\alpha $ is trivial, then it consists 
of a single element $ 
e\in 
B_N\setminus B_M $ where $ e $ was never removed from $ B_N $, thus $ e\in B_N' $. This shows $ \mathsf{ter}(\mathcal{P})\subseteq B_N' $.

It remains to show that $ B_M'\setminus A \subseteq B_N' $. We have $ B_M\setminus A \subseteq B_N $, because $ B_M 
\setminus B_N \subseteq 
A $ by the definition of $ A $.  For every $ e\in B_M' 
\setminus B_M $, there exists 
an arc $ ef \in A(\mathcal{P})\cap D(B_M) $ by construction. Here either $ e\in B_N\setminus B_M $ or there exists an arc $ ge\in 
A(\mathcal{P})\cap D^{-1}(B_N) $. In 
both 
cases $ e\in B_N' $ is guaranteed. Hence $ B_M' \setminus B_M \subseteq B_N' $. Finally, if $ e\in (B_M' \cap B_M)\setminus A $, then in 
particular $ e\in B_M \setminus A\subseteq B_N $ and no $ P_\alpha $ goes through $ e $, thus $ e\in B_N' $.
\end{proof}
\begin{claim}\label{clm: N A indep}
$ H^{*}\in \mathcal{I}(N/A)$.
\end{claim}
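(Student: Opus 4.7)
The plan is to verify $H^{*}\in \mathcal{I}(N/A)$ directly from the definition of contraction: I will find a base $B_0$ of $N\!\!\upharpoonright\!\! A$ such that $H^{*}\cup B_0$ is $N$-independent. The natural candidate is $B_0:=B_N'\cap A$, and the crucial point is that it lies inside $B_N'$ together with $H^{*}$, so $N$-independence comes for free from $B_N'\in\mathcal{B}(N)$.

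First I would record that $H^{*}$ actually lives in the ground set $E\setminus A$ of $N/A$: the summand $B_M'\setminus A$ is disjoint from $A$ by definition, while each $\mathsf{ter}(P_\alpha)\notin A$ by property~(\ref{item: avoids}) of the sequence $\mathcal{P}$. So $H^{*}\cap A=\emptyset$.

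Next I would check that $B_0:=B_N'\cap A$ is a base of $N\!\!\upharpoonright\!\! A$. Independence is clear since $B_0\subseteq B_N'\in\mathcal{B}(N)$. For spanning, Observation~\ref{obs: same bad in BN} gives $B_N'\cap A=B_N\cap A$, and Observation~\ref{obs: bad are N-spanned} states that $B_N\cap A$ spans $A$ in $N$; hence $B_0$ spans $A$ in $N$, i.e.\ it is a base of $N\!\!\upharpoonright\!\! A$.

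Finally, combining Claim~\ref{clm: H subset BN} (which gives $H^{*}\subseteq B_N'$) with $B_0\subseteq B_N'$ yields $H^{*}\cup B_0 \subseteq B_N'\in\mathcal{B}(N)$, so $H^{*}\cup B_0\in\mathcal{I}(N)$. By the definition of the contraction $N/A$ relative to the chosen base $B_0$ of $N\!\!\upharpoonright\!\! A$, this is exactly what it means for $H^{*}$ to be independent in $N/A$. There is no real obstacle here: once Claim~\ref{clm: H subset BN} and Observations~\ref{obs: bad are N-spanned}~and~\ref{obs: same bad in BN} are in hand, the statement reduces to a short bookkeeping argument with the definition of contraction.
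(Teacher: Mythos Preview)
Your proof is correct and follows essentially the same route as the paper's own argument: both use Claim~\ref{clm: H subset BN} to place $H^{*}$ inside $B_N'$, use property~(\ref{item: avoids}) to ensure $H^{*}\cap A=\emptyset$, and combine Observations~\ref{obs: bad are N-spanned} and~\ref{obs: same bad in BN} to identify $B_N'\cap A$ as a base of $N\!\!\upharpoonright\!\! A$, concluding via the definition of contraction. Your exposition is in fact slightly more explicit in spelling out why $B_0$ is a base of $N\!\!\upharpoonright\!\! A$, but there is no substantive difference.
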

\begin{proof}
By Claim \ref{clm: H subset BN}, $ H^{*} $ is a subset of the $ N $-base $ B_N' $. We also know that  $ A\cap H^{*}=\emptyset $ because no 
$ P_\alpha $ meets $ A $ (see property (\ref{item: avoids})). Observation \ref{obs: bad are N-spanned}
ensures $A\subseteq 
\mathsf{span}_N(B_N\cap A) $ and hence by
$ B_N\cap A=B_N'\cap A $ (Observation \ref{obs: same bad in BN}) we have $A\subseteq \mathsf{span}_N(B_N'\cap A) $. 
Thus, the union of $ H^{*}\subseteq E\setminus A $ and the base $ B_N'\cap A $ of $ N\!\!\upharpoonright\!\! A $ is $ N 
$-independent. This means $ H^{*}\in \mathcal{I}(N/A)$.
\end{proof}
\begin{claim}\label{clm: does not span N A}
$ H^{*}$ does not span $ N/A$.
\end{claim}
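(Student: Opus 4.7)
The plan is to produce an element $e \in B_N' \setminus A$ that is missing from $H^*$. This suffices because $B_N' \setminus A$ is already a base of $N/A$: by Observation \ref{obs: same bad in BN} we have $B_N' \cap A = B_N \cap A$, and this set spans $A$ in $N$ by Observation \ref{obs: bad are N-spanned}, so $B_N' \cap A$ is a base of $N\!\!\upharpoonright\!\! A$ and $B_N' \setminus A$ is a base of $N/A$. Given such an $e$, the $N/A$-independent set $H^*$ (Claim \ref{clm: N A indep}) will then be properly contained in the strictly larger $N/A$-independent set $B_N' \setminus A$, so $H^*$ is not a maximal $N/A$-independent set, and therefore cannot span $N/A$.

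I will search for $e$ inside $J_N$, exploiting $|J_N| = \kappa$, by ruling out three subsets of $J_N$ each of cardinality less than $\kappa$. First, $|J_N \cap A| < \kappa$: since $J_N \cap (I \cup J_M) = \emptyset$, we have $J_N \subseteq B_N \setminus B_M$, so each $v \in J_N$ is a valid trivial $b$-path at $v$, and $\kappa$ many such terminals lying in $A$ would contradict Lemma \ref{lem: no kappa path unopular}. Second, $|B_N \setminus B_N'| < \kappa$: every removed edge corresponds to an arc in $A(P_\alpha) \cap D^{-1}(B_N)$ for some $\alpha < \xi$; since each $P_\alpha$ is finite and $\xi < \kappa$ by Claim \ref{clm: xi smallal kappa}, the regularity of $\kappa$ yields the bound. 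Third, $|J_N \cap H^*| < \kappa$: the disjointness $J_N \cap B_M = \emptyset$ forces $J_N \cap (B_M' \setminus A) \subseteq B_M' \setminus B_M$, which is bounded as above, while $|\mathsf{ter}(\mathcal{P})| \leq \xi < \kappa$.

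By the regularity of $\kappa$, the complement $J_N \setminus (A \cup (B_N \setminus B_N') \cup H^*)$ has cardinality $\kappa$; any $e$ in it lies in $B_N' \setminus A$ (since $e \in B_N$, $e \notin B_N \setminus B_N'$, and $e \notin A$) and avoids $H^*$. The only step that genuinely exploits the machinery developed in this paper is the bound $|J_N \cap A| < \kappa$ via Lemma \ref{lem: no kappa path unopular}---this is precisely the payoff of the construction of bad edges, the popularity matroid, and the maximal sequence $\mathcal{P}$. Everything else is a short cardinality count.
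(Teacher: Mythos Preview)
Your proof is correct and follows essentially the same approach as the paper. Both arguments locate an element of $J_N = B_N\setminus B_M$ outside $A$ and outside $H^{*}$ that survives in $B_N'$, using Lemma~\ref{lem: no kappa path unopular} for $|J_N\cap A|<\kappa$ and Claim~\ref{clm: xi smallal kappa} for the remaining cardinality bounds; the only cosmetic difference is that the paper excludes the set $\{\mathsf{in}(P_\alpha):\alpha<\xi\}$ (exploiting that any vertex of a $b$-path lying in $B_N\setminus B_M$ must be its initial vertex), whereas you exclude $B_N\setminus B_N'$ and $H^{*}$ directly.
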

\begin{proof}
By Lemma \ref{lem: no kappa path unopular}, we have in particular $ \left|(B_N\setminus B_M)\cap A\right|<\kappa$ because each $ e \in 
(B_N\setminus 
B_M)\cap A$ can be considered as 
a trivial 
$ b $-path. Since $ \xi<\kappa $ (Claim \ref{clm: xi smallal kappa}) and $ \left|B_N\setminus B_M\right|=\kappa $ by the 
definition of $ \kappa $, the set 
$\boldsymbol{J}:= B_N \setminus 
(B_M \cup A\cup \{ \mathsf{\mathsf{in}_D(P_\alpha):\ \alpha<\xi} \})\subseteq B_N'\setminus B_M' $ still has size $ \kappa $. Thus in 
particular $ J\neq \emptyset $. Since $ H^{*} $, $ B_N'\cap A $, and $ J $ are pairwise disjoint subsets of $ B_N' $ and $  B_N'\cap A $ spans $ 
A 
$ in $ N $ (see Observations \ref{obs: bad are N-spanned} and \ref{obs: same bad in BN}), we conclude $ J\in \mathcal{I}(N/(A\cup H^{*})) $.
\end{proof}
\begin{lemma}\label{lem: spans M A}
$ H^{*}\in \mathcal{B}(M\setminus A) $.
\end{lemma}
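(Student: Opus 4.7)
The plan is to show two things: $H^{*}\in\mathcal{I}(M)$ and $H^{*}$ $M$-spans $E\setminus A$, which together yield $H^{*}\in\mathcal{B}(M\setminus A)$. Both parts will proceed by contradiction, exploiting the maximality of $\mathcal{P}$ and the preparatory lemmas.

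For $M$-independence, suppose some $M$-circuit $C\subseteq H^{*}$. Since $B_M'\setminus A\in\mathcal{I}(M)$, the circuit $C$ must meet $\mathsf{ter}(\mathcal{P})$, and may additionally hit finitely many tails $t\in T$ of $D(B_M)$-arcs in $\mathcal{P}$. For each such $t$ with associated path index $\alpha_t$, property~(4) gives $t\in\mathsf{span}_M(Y_{\alpha_t})$ for $Y_{\alpha_t}:=(B_M\setminus A)\cup\{\mathsf{ter}(P_\beta):\beta<\alpha_t\}\subseteq B$, so the fundamental circuit $D_t:=C_M(t,Y_{\alpha_t})\subseteq B\cup\{t\}$ exists with $D_t\cap T\subseteq\{t\}$. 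Iteratively performing circuit elimination in $M$ against these $D_t$'s strips every $T$-element from the support and produces an $M$-circuit contained in $B=(B_M\setminus A)\cup\mathsf{ter}(\mathcal{P})$, contradicting $B\in\mathcal{I}(M)$ given by Claim~\ref{clm: original base E minus A}.

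For $M$-spanning, suppose some $e\in(E\setminus A)\setminus\mathsf{span}_M(H^{*})$. Splitting $(E\setminus A)\setminus H^{*}=F\sqcup(E\setminus(A\cup B_M\cup T\cup\mathsf{ter}(\mathcal{P})))$ gives two cases. In the second case ($e\notin B_M$), I would follow the pattern of Claim~\ref{clm: original base E minus A}: Observation~\ref{obs: good are spanned} yields an $L$-spanning of $e$ from $\mathsf{out}_{D(B_M)}(e)\setminus A$; this cannot be achieved within $H^{*}\cup\{e\}$ by an $M$-circuit alone, so it must be witnessed by a popular set. After trimming its $\triangle$-system and $b$-path-system via Lemmas~\ref{lem: no kappa path unopular} and~\ref{no kappa petal in A}, picking a $Q_p$ disjoint from the $<\kappa$-sized union $U$ of escorting circuits and of $C_M(\mathsf{ter}(P_\beta),B_M)$ for $\beta<\xi$ (whose size bound comes from $\xi<\kappa$ in Claim~\ref{clm: xi smallal kappa}), and truncating $Q_p$ at the first edge not $M$-spanned by $B$, one obtains an extension $P_\xi$ satisfying (1)--(4), contradicting the maximality of $\mathcal{P}$.

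The main obstacle is the first case, $e=f\in F$, where $B$ already $M$-spans $f$ and a direct appeal to maximality fails. Here $f$ is the head of some $D(B_M)$-arc $e_0f\in A(P_{\alpha_0})$ with $e_0\in T\subseteq H^{*}$, and one has to exhibit an $M$-circuit through $f$ inside $H^{*}\cup\{f\}$. I expect to do so by starting from $C_M(f,B_M')$ and tracking its evolution under the sequence of exchanges taking $B_M$ to $B_M'$: Corollary~\ref{cor: fundamental change} applied iteratively yields $C_M(f,B_M')\subseteq U_{<\xi}$, the union of escorting circuits of $\mathcal{P}$. Whenever this circuit meets some $a\in A$, one replaces it by strong circuit elimination against an $M$-circuit through $a$ supplied by the popularity structure of a suitable tail $e'\in T$ with $a\in C_M(e',B_M)$ (via Observation~\ref{obs: good are spanned} together with a trimmed $\triangle$-system whose kernel is an $L$-circuit through $a$ avoiding $A$, and whose petals sit in $I\cup\mathsf{ter}(\mathcal{Q})$). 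After finitely many such substitutions the resulting $M$-circuit through $f$ lies in $H^{*}\cup\{f\}$, contradicting $f\notin\mathsf{span}_M(H^{*})$. This is the step where the combined strength of the popularity matroid and the bad-set iteration is essential.
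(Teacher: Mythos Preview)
Your independence argument is essentially sound: using property~(4) to get fundamental circuits $D_t\subseteq B\cup\{t\}$ with $D_t\cap T=\{t\}$, and then eliminating the $T$-elements one by one to land a circuit inside $B$, contradicting Claim~\ref{clm: original base E minus A}. This is a reasonable hands-on alternative to the paper's global approach.

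The spanning argument, however, has a genuine gap. In your case $e\notin B_M$, you propose to contradict the maximality of $\mathcal{P}$ by exhibiting a new path $P_\xi$ ``truncated at the first edge not $M$-spanned by $B$''. But Claim~\ref{clm: original base E minus A} already tells you $B\in\mathcal{B}(M\setminus A)$, so \emph{every} edge of $E\setminus A$ is $M$-spanned by $B$; since $Q_p$ avoids $A$, there is no such first edge, and in particular no choice of $\mathsf{ter}(P_\xi)$ can satisfy property~(\ref{item: independent}). The maximality of $\mathcal{P}$ was exhausted in proving Claim~\ref{clm: original base E minus A} and cannot be reused here. (In fact this case is redundant: once $F\subseteq\mathsf{span}_M(H^{*})$ you get $B\subseteq\mathsf{span}_M(H^{*})$ and hence all of $E\setminus A$; but you do not argue this reduction.)

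Your case $e=f\in F$ is also problematic. You write that one eliminates each $a\in C_M(f,B_M')\cap A$ against ``an $M$-circuit through $a$ supplied by the popularity structure \dots\ via Observation~\ref{obs: good are spanned} together with a trimmed $\triangle$-system whose kernel is an $L$-circuit through $a$ avoiding $A$''. But $a\in A$, so an $L$-circuit through $a$ avoiding $A$ is a contradiction in terms, and Observation~\ref{obs: good are spanned} only applies to edges outside $B_M\cup A$, not to $a\in B_M\cap A$. There is no evident source of an $M$-circuit through $a$ contained in $H^{*}\cup\{a\}$, so the elimination cannot proceed as sketched.

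The paper avoids both difficulties by a single stroke: it verifies that the same sequence $\mathcal{P}$ satisfies the hypotheses of Lemma~\ref{lem: base exchange path sequence} in the \emph{minor} pair $(M\setminus A,\,N/A)$ with base pair $(B,\,B_N\setminus A)$, using properties~(\ref{item: avoids}) and~(\ref{item: legit}) together with Corollary~\ref{cor: fundamental change} to control how the $M$-fundamental circuits shift when passing from $B_M$ to $B$. Then Lemma~\ref{lem: base exchange path sequence} gives directly that the first coordinate of $(B,B_N\setminus A)\circ\mathcal{P}$, which is exactly $H^{*}$, is a base of $M\setminus A$. This packages independence and spanning together and sidesteps the need to produce circuits through bad edges.
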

\begin{proof}
By Claim \ref{clm: original base E minus A} we know that $\boldsymbol{B}:=  (B_M\setminus A)\cup \mathsf{ter}(\mathcal{P})\in 
\mathcal{B}(M\setminus A) $. Moreover, $ B_N\setminus A \in \mathcal{B}(N/A) $ because $ B_N\cap A $ spans $ A $ in $ N $ (see 
Observation \ref{obs: bad are N-spanned}).  We intend to apply Lemma \ref{lem: base exchange path sequence} with the matroid pair $ 
(M\setminus A, N/A) $, the base pair $ (B, B_N\setminus A) $ and $ \mathcal{P} $. To do so, we need to check that $ 
\mathcal{P}=\left\langle P_\alpha:\ \alpha<\xi  \right\rangle $ is a transfinite sequence of paths without shortcuts in $\boldsymbol{D}:= 
D_{M\setminus A}(B)\cup D^{-1}_{N/A}(B_N\setminus A)  $  
such that for 
every $ \beta<\alpha<\xi $, $ P_\alpha $ is disjoint from all the escorting circuits of $ P_\beta $ w.r.t. $ (M\setminus A, N/A)$  and $(B, 
B_N\setminus A) $. Clearly, $ 
E(\mathcal{P})\subseteq E\setminus A $ because of property (\ref{item: avoids}). For $ ef \in A(\mathcal{P})\cap 
D^{-1}(B_N) $, 
we have $ 
C_{N/A}(f,B_N\setminus A)=C_N(f, B_N)\setminus A \ni e $. Thus $ ef \in D^{-1}_{N/A}(B_N\setminus A) $ and no 
shortcut arc occurs in $ D^{-1}_{N/A}(B_N\setminus A) $ for the 
paths in $ \mathcal{P} $. Let  $ ef\in 
A(\mathcal{P})\cap 
D(B_M) $. Then there is 
an $ \alpha<\xi $ such that $ 
ef\in A(P_\alpha)\cap D(B_M) $. Property (\ref{item: legit}) ensures that $ C_M(e, B) \cap \mathsf{ter}(\mathcal{P})\subseteq \{ 
\mathsf{ter}(P_\beta):\ \beta<\alpha \} $. Let us execute finitely many base exchanges iteratively starting with $ B_M $ and adding each element 
of $ 
C_M(e, B) \cap \mathsf{ter}(\mathcal{P}) $ to the base while removing an element of $ A $ in each step. Note that property (\ref{item: 
independent}) ensures that it is possible and (\ref{item: legit}) 
guarantees that the fundamental circuit of $ e $ in $ M $ w.r.t. the resulting base is the same as w.r.t. $ B $. By keeping track of the changes of 
the fundamental circuit of $ e $ via Corollary \ref{cor: fundamental change} and knowing  that $ P_\alpha $ does not meet $ 
C_M(\mathsf{ter}(P_\beta),B_M) $ for $ \beta<\alpha $ (see property (\ref{item: avoids})) we have the following conclusions: $ f\in C_M(e,B) 
$ and hence $ ef\in D_{M\setminus A}(B) $, moreover,   $ C_M(e,B)\setminus C_M(e, B_M)\subseteq \bigcup_{\beta<\alpha} 
C_M(\mathsf{ter}(P_\beta))  $. Since $ \bigcup_{\beta<\alpha} 
C_M(\mathsf{ter}(P_\beta)) $ does not meet $ P_\alpha $ by property (\ref{item: avoids}), there is no shortcut arc in $ 
D_{M\setminus A}(B)  $ 
for $ P_\alpha $. Thus the paths of $ \mathcal{P} $ are paths without shortcuts in $ D $ as well.  Property (\ref{item: avoids}) together with the 
facts $ C_{N/A}(f,B_N\setminus A)=C_N(f, B_N) $ for $ f\in (E\setminus A)\setminus B_N $  and $ C_M(e,B)\setminus C_M(e, B_M)\subseteq 
\bigcup_{\beta<\alpha} 
C_M(\mathsf{ter}(P_\beta))  $ for $ e \in E(P_\alpha)\setminus B_M $  ensures that
 $ P_\alpha $ is disjoint from all 
the escorting circuits of $ P_\beta $ w.r.t. $ (M\setminus A, N/A)$  and $(B, B_N\setminus A) $.

On the one hand, Lemma \ref{lem: base exchange path sequence} ensures that the first coordinate of $ (B, B_N\setminus A)\circ \mathcal{P} $ is 
a base of $ M\setminus A $. On the other hand, it is clear from the definition of $ B $ and $ H^{*} $ that the first coordinate of
$ (B, B_N\setminus A)\circ \mathcal{P} $ is $ H^{*} $. Therefore $ H^{*}\in \mathcal{B}(M \setminus A) $.

\end{proof}

Claims \ref{clm: N A indep} and \ref{clm: does not span N A} together with Lemma \ref{lem: spans M A} imply that $ H^{*} $ is an $ 
(M\setminus A, N/A) $-hindrance. But then by Observation \ref{obs:  hindrance minor}, $ H^{*} $ is an $ 
(M,N) $-hindrance as well. Thus $ (M,N) $ is hindered which concludes the proof of Theorem \ref{thm: main}.

\printbibliography
\end{document}